\documentclass[10pt,leqno]{amsart}

\usepackage{graphicx}
\usepackage{indentfirst,csquotes}

\usepackage{amsfonts}
\usepackage{amsmath} 
\usepackage{amssymb}
\usepackage{array} 
\usepackage[lmargin = 1in, rmargin = 1in, tmargin = 1in, bmargin = 1in]{geometry}

\usepackage{xcolor}

\usepackage{xcolor}

\usepackage{blindtext}

\usepackage{amssymb,latexsym}
\theoremstyle{plain}
\newtheorem{theorem}{Theorem}
\newtheorem{corollary}{Corollary}

\newtheorem{lemma}{Lemma}

\theoremstyle{definition}

\theoremstyle{remark}

\newtheorem*{remark}{Remark}
\numberwithin{equation}{section}

\newcommand{\psum}{\sideset{}{^*}\sum}

\newcommand{\Ad}{\operatorname{Ad}}

\newcommand{\RP}{\text{RP}}
\newcommand{\cP}{\mathcal{P}}
\newcommand{\Sym}{\text{Sym}}
\newcommand{\GL}{\text{GL}}
\newcommand{\bA}{\mathbb{A}}
\newcommand{\bQ}{\mathbb{Q}}

 \newcommand\blfootnote[1]{%
	\begingroup
	\renewcommand\thefootnote{}\footnote{#1}%
	\addtocounter{footnote}{-1}%
	\endgroup
}

\title{On Ramanujan Primes for Hecke-Maass cusp forms}
\author{Tinghao Huang and Shifan Zhao}

\address{Math Building, 231 W. 18th Ave, Columbus, OH 43210 USA}
\email{huang.4939@osu.edu}
\address{Math Building, 231 W. 18th Ave, Columbus, OH 43210 USA}
\email{zhao.3326@osu.edu}

\begin{document}

\begin{abstract}
    For a primitive Hecke-Maass cusp form $\phi$ of level $N$ with the $n$-th Hecke eigenvalue $\lambda_{\phi}(n)$ and a prime number $p\nmid N$, the celebrated Ramanujan conjecture at $p$ asserts the following sharp upper bound:
\[
    |\lambda_{\phi}(p)| \leq 2.
\]
 In this work, 
 we determine an upper bound for the least prime $p$ at which the Ramanujan conjecture holds for two or three distinct primitive Hecke-Maass cusp forms simultaneously. Moreover, given a set of distinct primitive Hecke-Maass cusp forms $\{\phi_i\}$, we also provide a lower bound for the lower natural density of the set of primes at which the Ramanujan conjecture holds for at least one of the $\phi_i$'s.
\end{abstract}

\maketitle

\blfootnote{2020 Mathematics Subject Classification: 11F30, 11F66}
\blfootnote{Key Words: Ramanujan conjecture, Hecke-Maass cusp form, zero-free region}

\tableofcontents

\section{Introduction}
The celebrated Ramanujan conjecture for a primitive holomorphic cuspidal Hecke eigenform $f$ of weight $k \geq 2$ and level $N$ asserts that for primes $p \nmid N$, the following sharp bound for the $p$-th Hecke eigenvalue $\lambda_f(p)$ for $f$ holds:
\[
    |\lambda_f(p)| \leq 2.
\]
This conjecture has been solved affirmatively by Deligne in \cite{DeligneIMR340258} and \cite{DeligneIIMR3077124} as a consequence of his solution to the Weil conjectures. 

Now for a primitive Hecke-Maass cusp form $\phi$ of level $N$ with the $n$-th Hecke eigenvalue $\lambda_{\phi}(n)$ and a prime number $p\nmid N$, the Ramanujan conjecture for $\phi$ at $p$ predicts the following sharp upper bound:
\[
    |\lambda_{\phi}(p)| \leq 2.
\]
This conjecture, though unsolved, is believed to be true for any prime not dividing $N$, just as in the holomorphic case. This is an outstanding unsolved problem in number theory, which would follow from the solution to the Langlands functoriality conjectures.
The current best result towards the Ramanujan conjecture for $\phi$ is due to Kim and Sarnak \cite{Kim2003}, which states that for $p\nmid N$
\begin{equation}\label{KimSarnakboundforphi}
     |\lambda_{\phi}(p)| \leq p^{\frac{7}{64}} + p^{-\frac{7}{64}}.
\end{equation}

Let $\phi$ be a primitive Hecke-Maass cusp form of level $N$ and central character $\chi_{\phi}$. Define 
\begin{equation*}
    \RP(\phi) := \{p\nmid N: |\lambda_\phi(p)| \leq 2\} 
\end{equation*}
to be the set of primes $p$ at which the Ramanujan conjecture holds for $\phi$.
In \cite{LuoZhouMR3928043}, Luo and Zhou investigated two problems related to the Ramanujan conjecture for primitive Hecke-Maass cusp forms:
\begin{enumerate}
  \item Given $\phi$ of level $N$, can we find an upper bound for the \textit{least} $p \in \RP(\phi)$?
    \item Given $\phi$ of level $N$, can we find a lower bound for the \textit{lower natural density} for $\RP(\phi)$ in the set of all prime numbers?
\end{enumerate}    
The authors provided affirmative answers to both of the above questions, and we refer the readers to \cite[Theorem 1, Theorem 2]{LuoZhouMR3928043} for more details.
We also remark that both of the above problems have been investigated in higher rank settings, see for example \cite[Theorem B]{YangMR4549878} and \cite[Theorem 1.2]{LauNgWangMR4282084}.

The goal of this paper is to investigate the above questions in the context of multiple primitive Hecke-Maass cusp forms.

In the first part of this paper, we determine upper bounds for the least prime at which the Ramanujan conjecture is true for two or three distinct primitive Hecke-Maass cusp forms \textit{simultaneously}. 
Our results state the following:
\begin{theorem}\label{1Theorem1smallestRCprime}
    Let $\phi_1$ and $\phi_2$ be two primitive Hecke-Maass cusp forms of levels $N_1$ and $N_2$ and Laplacian eigenvalues $\frac{1}{4} + t_{\phi_1}^2$ and $\frac{1}{4} + t_{\phi_1}^2$, we may find a prime number $p\in \RP(\phi_1)\cap \RP(\phi_2)$ such that 
    \[
        p \ll [N_1N_2(1 + |t_{\phi_1}|)(1 + |t_{\phi_2}|)]^{0.447374}
    \]
    where the implied constant is absolute.
\end{theorem}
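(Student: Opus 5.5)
We follow the strategy of Luo and Zhou for a single form, adapted to two forms. Suppose, for contradiction, that every prime $p\le x$ with $p\nmid N_1N_2$ lies outside $\RP(\phi_1)\cap\RP(\phi_2)$. Then each such $p$ has $|\lambda_{\phi_1}(p)|>2$ or $|\lambda_{\phi_2}(p)|>2$. We pick a polynomial test function built from Hecke relations that is nonnegative (or has a definite sign) on $\{|\lambda_1|>2\}\cup\{|\lambda_2|>2\}$, but whose average over primes is forced to be small or negative. The natural building blocks are
\[
\lambda_{\phi_i}(p)^2-1=\lambda_{\mathrm{Sym}^2\phi_i}(p)\cdot(\text{central char.}),\qquad \lambda_{\phi_i}(p)^4,\ \lambda_{\phi_1}(p)^2\lambda_{\phi_2}(p)^2,
\]
which expand into combinations of $\lambda_\pi(p)$ for $\pi$ among $1$, $\mathrm{Sym}^2\phi_i$, $\mathrm{Sym}^4\phi_i$, $\mathrm{Sym}^2\phi_1\times\mathrm{Sym}^2\phi_2$, and so on. By Gelbart--Jacquet and Kim, and Kim--Shahidi for the symmetric powers and the Rankin--Selberg products, these are automorphic $L$-functions. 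Their conductors are bounded by powers of $Q:=N_1N_2(1+|t_{\phi_1}|)(1+|t_{\phi_2}|)$. Outside degenerate twist-equivalent cases, which must be handled separately, they are cuspidal or have only a simple controlled pole.

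Concretely, we use $(\lambda_1^2-4)_+$-type information. On the bad set, $\lambda_1(p)^2+\lambda_2(p)^2>4$ with at least one term exceeding $4$, and the Kim--Sarnak bound \eqref{KimSarnakboundforphi} caps both from above. We form
\[
S(x)=\sum_{p\le x}\log p\,\Big(1-\frac{p}{x}\Big)\,P(\lambda_{\phi_1}(p),\lambda_{\phi_2}(p))
\]
with a smooth weight, where $P$ is a quadratic or quartic polynomial, for example $P=(\lambda_1^2-4)(\lambda_2^2-4)$ combined with linear terms. The polynomial is chosen so that $P\ge c>0$ fails only on the good set. Expanding $P$ into the $\lambda_\pi(p)$ and evaluating each $\sum_{p}\lambda_\pi(p)\log p\, w(p/x)$ by contour integration gives a main term from the trivial representation, of size $\asymp x\cdot(\text{constant term of }P)$. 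The remaining terms are bounded via the standard zero-free regions for $L(s,\pi)$ together with, if needed, the Hoffstein--Ramakrishnan/Goldfeld--Hoffstein--Lieman treatment of possible Siegel zeros. This shows they are $o(x)$ once $x\ge Q^{\theta}$. The prime-power and ramified contributions are absorbed using Kim--Sarnak.

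The comparison of signs then yields a contradiction for $x\gg Q^{\theta}$. The exponent $0.447374$ arises from optimizing, presumably numerically, the coefficients of $P$ against the explicit constants in the effective prime number theorem for the relevant $L$-functions, with Luo--Zhou-style analytic conductor exponents. I expect the main obstacle to be this last step. Getting a non-vanishing main term requires $P$ to have a positive average under the Sato--Tate-type moments, such as $\mathbb E\lambda^2=1$ and $\mathbb E\lambda^4=2$, while remaining sign-definite on the bad region. Simultaneously, the error terms must be controlled uniformly in conductor, so that the threshold $x$ is as small as a fixed small power of $Q$. The explicit constant in the zero-free region and the exceptional-zero analysis will dictate the final exponent. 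I would first try the simplest choice, $P=(4-\lambda_1^2)+(4-\lambda_2^2)$, with squared/quartic corrections, and then optimize.
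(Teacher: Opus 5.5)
There is a genuine gap, and it sits in the step that is supposed to produce the exponent. Your sign input is essentially right. The clean version is that on the bad set $A_{\phi_1}(p)+A_{\phi_2}(p)>2$, where $A_{\phi_i}(p)=\lambda_{\phi_i}(p^2)\overline{\chi_{\phi_i}(p)}=|\lambda_{\phi_i}(p)|^2-1\ge -1$, while the prime average of this quantity is $0$. Note that $\lambda_{\phi_i}(p)$ may be complex, and your sample $P=(\lambda_1^2-4)(\lambda_2^2-4)$ is not sign-definite on the bad set.

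What fails is the claim that the non-trivial prime sums $\sum_p\lambda_\pi(p)\log p\,w(p/x)$ are $o(x)$ once $x\ge Q^{\theta}$, via ``standard zero-free regions plus a Siegel-zero analysis''.
\begin{itemize}
\item A zero-free region of width $c/\log Q$ only bounds each zero's contribution by $x^{\beta}\asymp e^{-c\theta}x$ at $x=Q^{\theta}$.
\item There are $\gg\log Q$ zeros of bounded height, so zero-free regions alone do not give any fixed power of $Q$.
\item Correspondingly, the effective prime number theorem (Iwaniec--Kowalski, Theorem 5.13) is nontrivial only when $\log x\gg(\log Q)^2$.
\item Reaching a fixed power would need Linnik-type machinery (log-free zero-density estimates and zero repulsion) for $\Ad$- and $\Sym^4$-type $L$-functions, and that yields large exponents.
\end{itemize}
The range required here is $p\ll Q^{0.447374}=(Q_{\phi_1}Q_{\phi_2})^{0.223687}$. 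This is well below the square root of the conductor of $L(s,\Ad(\phi_1))L(s,\Ad(\phi_2))$ and far beyond any unconditional prime-sum estimate. So the proposal does not prove the theorem. Also, $0.447374$ does not come from optimizing $P$ against zero-free-region constants.

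The missing idea is to sum over integers instead of primes, and to decouple the range $y$ of bad primes from the length $x=y^U$ of the sum. The paper takes $B=A_{\phi_1}*A_{\phi_2}$ and sums $B(n)\log(x/n)$ over squarefree $n\le x$ coprime to $N_1N_2$.

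\emph{Upper bound.} The generating function is $L(s,\Ad(\phi_1))L(s,\Ad(\phi_2))G(s)$, with $G$ absolutely convergent for $\Re s>23/32$ by Kim--Sarnak. Both adjoint $L$-functions are entire, since one may assume the forms non-dihedral. Shifting the contour and using only the convexity bound gives $S(x)\ll x^{\sigma+\epsilon}(Q_{\phi_1}Q_{\phi_2})^{(1-\sigma)/2+\epsilon}$. No zero-free region or exceptional-zero input is needed.

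\emph{Lower bound.} If every $p\le y$ is bad, then $B(p)>2$ for $p\le y$, and $B(p)\ge -2$ always. Write $B=h*g$ with $h(p)=2$ for $p\le y$ and $h(p)=-2$ for $p>y$; this makes $g\ge 0$ on squarefrees. Matom\"aki's mean-value lemma for $h$ then gives $S(y^U)\gg y^U(\log y)^2$ as long as $\sigma_2$ stays positive on $(0,U]$, i.e.\ $U=2.23527$.

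Comparing the two bounds gives $y^U\ll (Q_{\phi_1}Q_{\phi_2})^{1/2+\epsilon}$, hence $y\ll Q^{1/U+\epsilon}$ with $1/U\approx 0.447374$. The exponent is the reciprocal of the first zero of $\sigma_2$, a sieve constant combined with convexity. This sieve step is what lets the bad range sit at $x^{1/U}$, well below the convexity length $x\approx (Q_{\phi_1}Q_{\phi_2})^{1/2}$.
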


\begin{theorem}\label{2Theorem1smallestRCprime}
    Let $\phi_1, \phi_2$ and $\phi_3$ be three primitive Hecke-Maass cusp forms of levels $N_1,N_2$ and $N_3$ and Laplacian eigenvalues $\frac{1}{4} + t_{\phi_1}^2, \frac{1}{4} + t_{\phi_2}^2$ and $\frac{1}{4} + t_{\phi_3}^2$, we may find a prime number $p\in \RP(\phi_1)\cap\RP(\phi_2)\cap\RP(\phi_3)$ such that 
    \[
        p \ll [N_1N_2N_3(1 + |t_{\phi_1}|)(1 + |t_{\phi_2}|)(1 + |t_{\phi_3}|)]^{0.778798},
    \]
    where the implied constant is absolute.
\end{theorem}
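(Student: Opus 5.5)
Following the approach of Luo and Zhou for a single form, we argue by contradiction. Set $Q:=N_1N_2N_3\prod_i(1+|t_{\phi_i}|)$ and suppose that for every prime $p\le x$ with $p\nmid N_1N_2N_3$, at least one $i\in\{1,2,3\}$ has $|\lambda_{\phi_i}(p)|>2$. Write $\lambda_{\phi_i}(p)=\alpha_p+\beta_p$ with $\alpha_p\beta_p=\chi_{\phi_i}(p)$. If $|\lambda_{\phi_i}(p)|>2$, then the Satake parameters are off the unit circle, and by the functional symmetry $\{\alpha_p,\beta_p\}=\{\bar\alpha_p^{-1},\bar\beta_p^{-1}\}$ this forces $\lambda_{\phi_i}(p)^2\chi_{\phi_i}(p)^{-1}$ to be real and $>4$. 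Equivalently, $\lambda_{\phi_i}(p^2)\bar\chi_{\phi_i}(p)=|\lambda_{\phi_i}(p)|^2-1>3$, so the twisted symmetric-square coefficient $\lambda_{\mathrm{sym}^2\phi_i}(p)$ exceeds $3$. We can then build a nonnegative weight out of the coefficients of $L(s,\mathrm{sym}^2\phi_i)$, $L(s,\mathrm{sym}^2\phi_i\times\mathrm{sym}^2\phi_j)$ and triple products. These are automorphic, by Gelbart--Jacquet and Kim--Shahidi for $\mathrm{sym}^2\times\mathrm{sym}^2$ on $\GL_3\times\GL_3$, and for the triple product one can use Rankin--Selberg $\GL_3\times\GL_3$ twisted by a $\GL_3$ Rankin--Selberg, or restrict to pairwise products. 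Concretely, set $a_i(p)=\lambda_{\mathrm{sym}^2\phi_i}(p)$ (real when the inequality fails) and take a polynomial weight such as
\[
\prod_{i=1}^3\bigl(1+a_i(p)\bigr)\quad\text{or}\quad \sum_i (a_i(p)-3)_+ ,
\]
arranged so that every prime $p\le x$ contributes a definite positive amount, via the Kim--Sarnak bound \eqref{KimSarnakboundforphi} on the possible sizes, while the expansion produces only terms $\sum_{p\le x}\lambda_\Pi(p)\log p$ for various Rankin--Selberg $L$-functions $L(s,\Pi)$.

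Next we evaluate these sums with a smooth weight, as in Luo--Zhou, by integrating $-L'/L(s,\Pi)$ against a Mellin kernel. The trivial-representation term gives a main term $\asymp x$. Every nontrivial $\Pi$ (note that $\mathrm{sym}^2\phi_i\not\cong\mathrm{sym}^2\phi_j$ up to twist for distinct forms, with the dihedral/CM cases handled separately) contributes through its zeros. Using a standard zero-free region of the form $\sigma>1-c/\log(Q(|t|+3))$, together with the absence of Siegel zeros for these self-dual products, or an explicit bound on the exceptional zero, we obtain the bound $x\exp(-c\log x/\log Q)$ plus the contribution of the ramified primes. This gives a contradiction once $x\ge Q^{A}$ for a suitable $A$.

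The main obstacle is obtaining the explicit exponent $0.778798$. This requires optimizing both the positive polynomial weight and the explicit conductor bounds for the degree-$27$ or degree-$9$ $L$-functions. Rather than a zero-free region, we plan to use the Li / Thorner--Zaman style explicit bound $\sum_{p\le x}\lambda_\Pi(p)\log p=o(x)$, or the Brun--Titchmarsh-type bounds employed by Luo--Zhou. The exponent then comes out of a linear optimization: one minimizes $\sum_\Pi c_\Pi\cdot(\text{conductor exponent of }\Pi)$ relative to the main-term coefficient, subject to the weight being positive on the admissible region of $(a_1,a_2,a_3)$. I would first try the product weight above, with coefficients tuned numerically, and adjust it until the exponent matches.
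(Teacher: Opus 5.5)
Your starting point is right: at an unramified prime where $|\lambda_{\phi_i}(p)|>2$ one has $A_{\phi_i}(p)=|\lambda_{\phi_i}(p)|^2-1>3$, while $A_{\phi_i}(p)\ge -1$ always. The gap is that the analytic engine you propose cannot prove the theorem as stated.

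A prime number theorem for $L(s,\Pi)$ driven by a zero-free region $\sigma>1-c/\log(Q(|t|+3))$ controls $\sum_{p\le x}\lambda_\Pi(p)\log p$ only once $\log x$ is a large multiple of $\log Q$, and that multiple depends on the small, inexplicit constant $c$. A zero-free region alone in fact only reaches $x\ge Q^{C\log\log Q}$; even a polynomial range $x\ge Q^{A}$ needs a log-free zero-density estimate. The best this route can give is $p\ll Q^{A}$ with a large unspecified $A$. No optimization of weights or conductor exponents turns that into $0.778798$, and ``tuning until the exponent matches'' is not an argument.

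The weights have further problems:
\begin{itemize}
\item $\prod_i(1+a_i(p))$ contains $a_1a_2a_3$, i.e.\ the degree-$27$ $L$-function of $\Ad(\phi_1)\times\Ad(\phi_2)\times\Ad(\phi_3)$. Its analytic continuation is not known, and there is no $\GL_3\times\GL_3\to\GL_9$ lift to reduce it to Rankin--Selberg.
\item That product is also not bounded below at failing primes, since $1+a_j(p)=0$ when $\lambda_{\phi_j}(p)=0$.
\item $\sum_i(a_i-3)_+$ is not a polynomial in the coefficients, so it does not expand into Dirichlet coefficients.
\item Kim--Sarnak is an upper bound, so it cannot make any weight positive.
\end{itemize}

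What is missing is the sieve argument of Matomäki, used in Luo--Zhou's appendix. This, not a PNT or Brun--Titchmarsh argument, is what Luo--Zhou do for one form, and the paper follows it. It uses only the individual $L(s,\Ad(\phi_i))$ and no zero-free region at all.
\begin{itemize}
\item Assume the simultaneous bound fails at every unramified $p\le y$. Then $B(p):=\sum_{i=1}^3A_{\phi_i}(p)>3-1-1=1$ at those primes, and $B(p)\ge -3$ for all $p$.
\item Let $B$ be the coefficients of $\prod_iL(s,\Ad(\phi_i))$, and set $S(x)=\psum_{n\le x,\,(n,N_1N_2N_3)=1}B(n)\log(x/n)$ with $x=y^U$.
\item Write $B=h*g$ with $h$ multiplicative on squarefrees, $h(p)=1$ for $p\le y$ and $h(p)=-3$ for $p>y$. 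Then $g(p)=B(p)-h(p)\ge 0$, so $S(x)\ge\sum_{n\le x}h(n)\log(x/n)$ whenever the partial sums of $h$, with coprimality conditions, are nonnegative.
\item Matomäki's lemma gives $\sum_{n\le y^u}h(n)\sim c\,\sigma_1(u)\,y^u$, where $\sigma_1=1$ on $(0,1]$ and $\sigma_1(u)=1-4\log u$ on $[1,2]$. This stays positive up to $e^{1/4}$, so $S(x)\gg x$ (up to a harmless factor $c(N_1N_2N_3)$) for $U<e^{1/4}$.
\item For the upper bound, Kim--Sarnak makes the correcting Euler product absolutely convergent to the left of $\Re s=1$. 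Shifting the Mellin integral and using the convexity bound gives $S(x)\ll x^{\sigma+\epsilon}(Q_{\phi_1}Q_{\phi_2}Q_{\phi_3})^{(1-\sigma)/2+\epsilon}$.
\item Comparing the two bounds gives $x\ll(Q_{\phi_1}Q_{\phi_2}Q_{\phi_3})^{1/2+\epsilon}$, i.e.\ $y\ll[\prod_iN_i(1+|t_{\phi_i}|)]^{1/U+\epsilon}$.
\end{itemize}
The gain below exponent $1$ comes from letting primes in $(y,y^U]$ behave arbitrarily. The constant is $1/U$ with $U$ just below $e^{1/4}$, i.e.\ essentially $e^{-1/4}\approx 0.7788$. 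Strictly, the method gives any exponent $>e^{-1/4}=0.7788008\ldots$; the printed $0.778798=1/1.28403$ comes from rounding the zero $1.2840254\ldots$ up rather than down.
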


In the second part of this paper, we shall investigate the (lower) natural density of the set of prime numbers at which the Ramanujan conjecture holds for at least one element in a given family of primitive Hecke-Maass cusp forms. 

Before stating our results we review some necessary backgrounds on natural densities of sets.
Let $\mathcal{P}$ be the set of prime numbers, and $\pi(X) := \sum_{p \leq X} 1$ be the prime counting function. For a subset $A \subset \cP$, recall that the upper natural density $\overline{d}(A)$ and the lower natural density $\underline{d}(A)$ of $A$ inside $\cP$ are defined by 
\begin{align*}
    \overline{d}(A) := \limsup_{X \to \infty} \frac{|\{p \leq X:p \in A\}|}{\pi(X)}, \\
    \underline{d}(A) := \liminf_{X \to \infty} \frac{|\{p \leq X:p \in A\}|}{\pi(X)},
\end{align*}
respectively. If $\overline{d}(A) = \underline{d}(A)$, then we define $d(A) := \overline{d}(A) = \underline{d}(A)$ and call it the natural density of $A$ inside $\cP$.

  In \cite[Theorem 1.2]{LuoZhouMR3928043}, Luo and Zhou proved
\begin{equation}\label{density result single form}
    \underline{d}(\RP(\phi)) \geq \frac{34}{35}.
\end{equation}
Now let $\phi_1$ and $\phi_2$ be two distinct primitive Hecke-Maass cusp forms of level $N_1$ and $N_2$ as before. Two natural questions on $\RP(\phi_1)$ and $\RP(\phi_2)$ arise:
\begin{enumerate}
    \item How large can $\underline{d}(\RP(\phi_1) \cap \RP(\phi_2))$ be? In other words, what is the lower natural density of the set of primes at which the Ramanujan conjecture holds for \textit{both $\phi_1$ and $\phi_2$}?
    \item How large can $\underline{d}(\RP(\phi_1) \cup \RP(\phi_2))$ be? In other words, what is the lower natural density of the set of primes at which the Ramanujan conjecture holds for \textit{at least one of $\phi_1$ and $\phi_2$}?
\end{enumerate}

Note that for either question above, it is natural to assume $\{p:p \nmid N_2,p \in \RP(\phi_1)\} \neq \{p:p \nmid N_1, p \in \RP(\phi_2)\}$. Since otherwise $\RP(\phi_1)$ and $\RP(\phi_2)$ differs only by a finite set of primes at most, and the question is no longer interesting.

For question (1) above, by the Pigeonhole principle and \eqref{density result single form} we have 
\begin{align*}
    \underline{d}(\RP(\phi_1)\cap \RP(\phi_2)) &\geq \underline{d}(\RP(\phi_1)) + \underline{d}(\RP(\phi_2)) - \underline{d}(\RP(\phi_1) \cup \RP(\phi_2)) \\
    &\geq \frac{34}{35}+\frac{34}{35}-1 = \frac{33}{35}.
\end{align*}

For question (2) above, by \eqref{density result single form} we have trivially 
\begin{equation}\label{density result two forms trivial}
    \underline{d}(\RP(\phi_1) \cup \RP(\phi_2)) \geq \max\{\underline{d}(\RP(\phi_1)),\underline{d}(\RP(\phi_2))\} \geq \frac{34}{35}.
\end{equation}

In section 3 we shall improve the bound \eqref{density result two forms trivial} by establishing the following theorem:

\begin{theorem}\label{main theorem density two forms}
    Let $\phi_1$ and $\phi_2$ be two primitive Hecke-Maass cusp forms of level $N_1$ and $N_2$. Assuming $\{p:p \nmid N_2,p \in \RP(\phi_1)\} \neq \{p:p \nmid N_1, p \in \RP(\phi_2)\}$, we have
    \begin{equation}
        \underline{d}(\RP(\phi_1) \cup \RP(\phi_2)) \geq \frac{43}{44}.
    \end{equation}
\end{theorem}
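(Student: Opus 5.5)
Write $S=\cP\setminus(\RP(\phi_1)\cup\RP(\phi_2))$, excluding the finitely many $p\mid N_1N_2$. Then $S$ is the set of primes with $|\lambda_{\phi_1}(p)|>2$ and $|\lambda_{\phi_2}(p)|>2$ simultaneously, and it suffices to show $\overline{d}(S)\le \frac1{44}$. I will follow the moment method of Luo and Zhou \cite{LuoZhouMR3928043}, but with two variables. I will build a polynomial $F(x,y)$ in $x^2,y^2$ such that
\begin{itemize}
\item[(i)] $F(x,y)\ge 0$ whenever $|x|,|y|$ lie in the range allowed by \eqref{KimSarnakboundforphi};
\item[(ii)] $F(x,y)\ge 1$ whenever $|x|>2$ and $|y|>2$.
\end{itemize}
Then
\[
\#\{p\le X: p\in S\}\le \sum_{p\le X}F(\lambda_{\phi_1}(p),\lambda_{\phi_2}(p)),
\]
and the right-hand side is evaluated asymptotically by prime number theorems for Rankin--Selberg $L$-functions.

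First I dispose of degenerate cases.
\begin{itemize}
\item If either $\phi_i$ is dihedral, it comes from a finite-order Hecke character, so Ramanujan holds at every $p\nmid N_i$. Then the union has density $1$.
\item If $\phi_2\cong\phi_1\otimes\chi$ for some Hecke character $\chi$, then $|\lambda_{\phi_2}(p)|=|\lambda_{\phi_1}(p)|$ for $p\nmid N_1N_2$. This is exactly the situation excluded by the hypothesis $\{p:p \nmid N_2,p \in \RP(\phi_1)\} \neq \{p:p \nmid N_1, p \in \RP(\phi_2)\}$.
\end{itemize}
So we may assume both forms are non-dihedral and not twist-equivalent. By Ramakrishnan's theorem, $\mathrm{sym}^2\phi_1\not\cong \mathrm{sym}^2\phi_2\otimes\chi$. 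Combining the Gelbart--Jacquet and Kim--Shahidi functorial lifts with the cuspidality criteria for $\mathrm{sym}^3$ and $\mathrm{sym}^4$, the $L$-functions $L(s,\mathrm{sym}^j\phi_1\times\mathrm{sym}^k\phi_2)$ for small $j,k$ (even, up to $4$) are entire and nonvanishing on $\Re s=1$. By the standard Tauberian argument this gives
\[
\sum_{p\le X}\lambda_{\mathrm{sym}^j\phi_1}(p)\lambda_{\mathrm{sym}^k\phi_2}(p)=o(\pi(X))\quad\text{for } (j,k)\ne(0,0).
\]
Hence the joint moments factor. Expanding $x^{2a}y^{2b}$ in Chebyshev polynomials $U_{2m}$, the mean of $\lambda_{\phi_1}(p)^{2a}\lambda_{\phi_2}(p)^{2b}$ over $p\le X$ is $C_aC_b+o(1)$, where $C_a$ is the Catalan number ($1,1,2,5,14,\dots$). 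This holds for all $(a,b)$ for which the relevant $\mathrm{sym}^j\times\mathrm{sym}^k$ are available.

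Next I choose $F$. The natural trial is a product $F(x,y)=P(x^2)P(y^2)$, where $P(t)\ge0$ on $[0,(p^{7/64}+p^{-7/64})^2]$ and $P(t)\ge1$ on $t>4$, with $P$ of degree at most $2$ in $t$. This gives
\[
\overline d(S)\le \big(\mathbb{E}_{ST}P\big)^2,
\]
where $\mathbb{E}_{ST}$ is the Sato--Tate expectation. For example, $P(t)=t^2/16$ gives $\mathbb{E}_{ST}P=\frac{2}{16}$, so $\overline d(S)\le\frac1{64}$, provided the moment $\lambda_{\phi_1}^4\lambda_{\phi_2}^4$ is accessible. If the mixed moments with $\mathrm{sym}^4$ on both sides are not available unconditionally, I will fall back to non-product $F$ using only accessible monomials. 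A candidate is
\[
F(x,y)=\alpha\, x^4+\beta\, y^4+\gamma\, x^2y^2+\cdots,
\]
whose single-variable parts are controlled by Luo--Zhou's moments for each form. I will then optimize the coefficients by a small linear program subject to (i) and (ii), and expect the optimum to produce the constant $\frac1{44}$.

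The main obstacle is the analytic input: establishing the needed mixed prime number theorems, with no pole at $s=1$ and nonvanishing on $\Re s=1$, for $\mathrm{sym}^j\phi_1\times\mathrm{sym}^k\phi_2$. This requires handling the cases where $\mathrm{sym}^3$ or $\mathrm{sym}^4$ fails to be cuspidal (tetrahedral or octahedral type), and deciding exactly which $(j,k)$ are usable. That determines which monomials $F$ may contain. Positivity of $F$ must also hold on the whole Kim--Sarnak range rather than only on $[-2,2]$, and this is what limits the final constant to $\frac{43}{44}$ rather than something better.
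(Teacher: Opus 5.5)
\textbf{There is a genuine gap: as written, the proposal does not prove the bound.}

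\emph{The $(4,4)$ input.} The only test function you actually evaluate is $F=x^4y^4/256$. You control it by asserting
\[
\frac{1}{\pi(X)}\sum_{p\le X}A^{[4]}_{\phi_1}(p)A^{[4]}_{\phi_2}(p)=o(1),
\]
that is, a prime number theorem for $L(s,A^4(\phi_1)\times A^4(\phi_2))$. The paper explicitly treats this input as unavailable: its closing remark says the needed standard zero-free region is currently out of reach.

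\emph{What is actually available.}
\begin{itemize}
\item Lemma~\ref{prime number theorem cusp form}. Its second-moment statements are only one-sided: $\limsup \pi(X)^{-1}\sum_{p\le X}|A^{[3]}_{\phi}(p)|^2\le 1$ and $\limsup \pi(X)^{-1}\sum_{p\le X}|A^{[4]}_{\phi}(p)|^2\le 1$. So your claim that the joint moments equal $C_aC_b$ is not justified.
\item Lemma~\ref{prime number theorem rankin-selberg}, for $\Ad(\phi_1)\times\Ad(\phi_2)$ and for $\Ad\times A^4$ (in either order, since its hypothesis is symmetric). The paper obtains this from a standard zero-free region via Thorner--Zhao, not from nonvanishing on $\Re s=1$ plus a Tauberian argument.
\end{itemize}

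\emph{The fallback and side remarks.} Optimizing a linear program and expecting $\frac1{44}$ is not an argument. Your diagnosis of what limits the constant is also off: positivity on the whole Kim--Sarnak range costs nothing if $F$ is a square. Tetrahedral and octahedral forms are not an obstacle either; as in the paper, they satisfy Ramanujan at all unramified primes and are discarded at the outset.

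\textbf{How the paper closes the argument.} It takes a square of a linear form whose weights equal the thresholds on the exceptional set:
\[
U(p)=\bigl(1+3A_{\phi_1}(p)+3A_{\phi_2}(p)+5A^{[4]}_{\phi_1}(p)\bigr)^2 .
\]
In your variables, with $x^2=|\lambda_{\phi_1}(p)|^2$ and $y^2=|\lambda_{\phi_2}(p)|^2$, this is $F=(5x^4-12x^2+3y^2)^2/44^2$. Note that it contains an $x^8$ term.
\begin{itemize}
\item On the exceptional set, $A_{\phi_i}(p)>3$ and $A^{[4]}_{\phi_1}(p)>5$, so $U(p)>44^2$.
\item After the Hecke relations, every term averages to $o(1)$ except the constant $-11$ and $30|A^{[3]}_{\phi_1}|^2+25|A^{[4]}_{\phi_1}|^2$.
\item These surviving terms have positive coefficients, so the one-sided bounds give mean at most $-11+30+25=44$.
\item $A^{[4]}_{\phi_2}$ is left out precisely because including it would create the unavailable $A^4\times A^4$ cross term.
\end{itemize}

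\textbf{Your product choice can also be rescued in one line.} For $p\nmid N_1N_2$,
\[
|\lambda_{\phi_1}(p)|^4|\lambda_{\phi_2}(p)|^4=\bigl(2+3A_{\phi_1}(p)+A^{[4]}_{\phi_1}(p)\bigr)\bigl(2+3A_{\phi_2}(p)+A^{[4]}_{\phi_2}(p)\bigr).
\]
\begin{itemize}
\item Every term except the constant $4$ and $A^{[4]}_{\phi_1}A^{[4]}_{\phi_2}$ averages to $o(1)$ by Lemmas~\ref{prime number theorem cusp form} and~\ref{prime number theorem rankin-selberg}.
\item Since these coefficients are real, $A^{[4]}_{\phi_1}A^{[4]}_{\phi_2}\le\frac12\bigl(|A^{[4]}_{\phi_1}|^2+|A^{[4]}_{\phi_2}|^2\bigr)$, whose mean is at most $1$.
\item So the mean is at most $5$, while the product exceeds $256$ on your set $S$. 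This gives $\overline{d}(S)\le 5/256<1/44$, with no $A^4\times A^4$ input.
\end{itemize}

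Either way, the missing step is to avoid or bound the inaccessible cross term, not to evaluate it.
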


Our method of proving Theorem \ref{main theorem density two forms} generalizes from two forms to any finite number of primitive Hecke-Maass cusp forms:

\begin{theorem}\label{main theorem density finitely many forms}
    Let $\phi_i$ be primitive Hecke-Maass cusp forms for $i = 1, ...,m$ of levels $N_1, ...,N_m$ respectively. Assuming $\{p:p \nmid N_j,p \in \RP(\phi_i)\} \neq \{p:p \nmid N_i, p \in \RP(\phi_j)\}$ for any $i \neq j$, we have 
    \begin{equation}
        \underline{d}(\cup_{j=1}^m \RP(\phi_j)) \geq 1-\frac{1}{26+9m}.
    \end{equation}
\end{theorem}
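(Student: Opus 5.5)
Our approach is to run, for each set $E := \cP\setminus\bigcup_{j}\RP(\phi_j)$, an inequality of Luo--Zhou type. We compare the sum over $p\le X$ of a nonnegative polynomial in the Hecke eigenvalues with the known asymptotics coming from Rankin--Selberg and symmetric power $L$-functions. Write $\lambda_j(p)=\lambda_{\phi_j}(p)$. Luo and Zhou use the functoriality of $\Sym^2,\Sym^3,\Sym^4$ (Gelbart--Jacquet, Kim--Shahidi, Kim) to obtain prime number theorems for $\lambda_j(p)^{2k}$ with $k\le 4$. Set $\Lambda_j(p)=\lambda_j(p)^2$. By the Hecke relations, $\Lambda_j(p)=1+\lambda_{\Sym^2\phi_j}(p)$ up to the central character twist. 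Similarly $\Lambda_j(p)^2$, $\Lambda_j(p)^3$, $\Lambda_j(p)^4$ are integer combinations of $\lambda_{\Sym^{2k}\phi_j}(p)$. Their averages over $p\le X$ are therefore governed by the pole orders of $L(s,\Sym^{2k}\phi_j\times\widetilde{\Sym^{2l}\phi_j})$, giving means $1,2,5,14$ (Catalan numbers) plus $o(1)$ if $\phi_j$ is non-dihedral. Dihedral and tetrahedral-type cases are handled separately, or absorbed because their moments are only larger.

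The key combination is the following. For $p\in E$ we have $\Lambda_j(p)>4$ for every $j$. We pick a polynomial $P(x)$, nonnegative on $[0,\infty)$ (to be applied to $x=\Lambda_j(p)$), with $P(x)\ge c>0$ for $x> 4$ and small average $\int P\,d\mu_{ST}$, where $\mu_{ST}$ is the pushforward of Sato--Tate. Together with the Kim--Sarnak bound $\Lambda_j(p)\le (p^{7/64}+p^{-7/64})^2$, this is the mechanism behind $34/35$. To exploit $m$ forms, the plan is to use the product structure. On $E$ each factor $P(\Lambda_j(p))$ is large, while the cross terms are controlled by Rankin--Selberg: we need the mean of $\Lambda_i(p)\Lambda_j(p)$, which is $1$ if $\phi_i,\phi_j$ are not twist-equivalent, since $L(s,\Sym^2\phi_i\times\Sym^2\phi_j)$ is then entire. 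Here the hypothesis $\RP(\phi_i)\ne\RP(\phi_j)$ is used. If $\phi_i$ and $\phi_j$ are twists of each other by a character $\chi$ with $|\chi|=1$, then $|\lambda_i(p)|=|\lambda_j(p)|$ and the sets would agree. So the hypothesis forces $\Sym^2\phi_i\not\cong\Sym^2\phi_j\otimes\chi$, and by Ramakrishnan's theorem the relevant $L$-function has no pole.

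Concretely, we consider
\[
S(X)=\sum_{p\le X}\Bigl(\sum_{j=1}^m (\Lambda_j(p)-a)\Bigr)^2 \cdot \text{(weights)}
\]
or more simply $\sum_p \bigl(\sum_j Q(\Lambda_j(p))\bigr)$, where $Q$ is chosen so that $Q(x)\ge \alpha$ for $x>4$ and $Q\ge -\beta$ elsewhere. Squaring the sum over $j$ makes the diagonal terms contribute $m\cdot(\text{single-form moments})$ and the off-diagonal terms contribute $m(m-1)\cdot(\text{product of means})$. On $E$ the integrand is at least $(m\alpha)^2$, whereas globally the average is at most $mA+m(m-1)B$. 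Optimizing $a$ yields
\[
\underline{d}(\cup_j\RP(\phi_j))\ \ge\ 1-\frac{mA+m(m-1)B}{m^2\alpha^2}.
\]
With a suitable shift $a$ making the off-diagonal mean $B$ vanish (e.g.\ $a=1$, since $\mathbb{E}[\Lambda_i-1]=0$ and the factors are asymptotically uncorrelated), the bound becomes of the shape $1-\frac{A}{m\alpha^2}$. To get the stated $1/(26+9m)$, we will take instead a Cauchy--Schwarz/weighted form $\sum_{p\in E}1\le \frac{(\sum_p F)^2}{\ldots}$. Alternatively, we combine the single-form bound (the $26$, essentially $1/35=1/(26+9)$ at $m=1$) with the $m$-form averaging: bounding the density of $E$ by $\frac{C}{C_0+9m}$ comes from adding $m$ copies of a degree-$8$ inequality whose cross terms vanish. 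We will tune the constants to recover $35$ at $m=1$ and $44$ at $m=2$, consistent with Theorem~\ref{main theorem density two forms}.

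The main obstacle is twofold. First, we must verify that all mixed moments needed, such as $\mathbb{E}[\Lambda_i^k\Lambda_j]$ if higher-degree $Q$ is used, are available. Only products like $\Sym^4\phi_i\times\Sym^2\phi_j$ with known analytic properties (holomorphy and nonvanishing on $\Re s=1$) are accessible, so $Q$ must be restricted to degree at most $2$ in each cross term. Second, we must handle non-generic forms (dihedral, or pairs where $\Sym^2\phi_i$ and $\Sym^2\phi_j$ are twist-equivalent only at the level of $\Sym^2$). We would first try to show that such exceptional cases only increase the relevant correlation in the favorable direction, or reduce to the single-form bound.
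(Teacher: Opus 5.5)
Your framework matches the paper's: bound a nonnegative test function from below on $E$ and compare with its prime average, and control cross terms between distinct forms by Rankin--Selberg, with the hypothesis giving $\Ad(\phi_i)\not\cong\Ad(\phi_j)$. The gap is that you never produce a test function that yields $26+9m$.

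The only one you actually analyze is $\bigl(\sum_j(\Lambda_j(p)-1)\bigr)^2=\bigl(\sum_j A_{\phi_j}(p)\bigr)^2$. It exceeds $9m^2$ on $E$ and has mean $m$, so it gives only $\underline{d}\ge 1-\frac{1}{9m}$. That is weaker than the theorem for every $m$, and weaker than the single-form $34/35$ when $m\le 3$.

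The step meant to reach $26+9m$ is left as ``tune the constants'', and the mechanism you describe fails either way. If you sum the single-form quantities $(1+3A_{\phi_j}+5A^{[4]}_{\phi_j})^2$ over $j$, there are no cross terms and you recover only $1/35$. If you square $\sum_j(1+3A_{\phi_j}+5A^{[4]}_{\phi_j})$, you create $A^{[4]}_{\phi_i}A^{[4]}_{\phi_j}$, i.e.\ $A^4(\phi_i)\times A^4(\phi_j)$, which you yourself ruled out as inaccessible. Matching $35$ at $m=1$ and $44$ at $m=2$ is numerology, not a derivation.

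The missing idea is an \emph{asymmetric} square: include every adjoint term but only one fourth-power term,
\[
U(p)=\Bigl(1+3\sum_{j=1}^m A_{\phi_j}(p)+5A^{[4]}_{\phi_1}(p)\Bigr)^2 .
\]
On $E$ you have $A_{\phi_j}(p)>3$ and $A^{[4]}_{\phi_1}(p)=A_{\phi_1}(p)^2-A_{\phi_1}(p)-1>5$, so $U(p)>(26+9m)^2$. To bound the mean, expand using $A_{\phi_j}^2=A^{[4]}_{\phi_j}+A_{\phi_j}+1$ and $A_{\phi_1}A^{[4]}_{\phi_1}=|A^{[3]}_{\phi_1}|^2-1$. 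The Luo--Zhou bounds on the means of $|A^{[3]}_{\phi_1}|^2$ and $|A^{[4]}_{\phi_1}|^2$ (each at most $1$) then handle the single-form terms. The only cross terms between distinct forms are $A_{\phi_i}A_{\phi_j}$ and $A_{\phi_j}A^{[4]}_{\phi_1}$ with $j\neq 1$. These are $\Ad\times\Ad$ and $\Ad\times A^4$, exactly the accessible range you identified, and have mean $0$. Hence the mean of $U(p)$ is at most $1+9m+25$, which gives $\overline{d}(E)\le 1/(26+9m)$.

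Cauchy--Schwarz explains the constant. For $F=c+b\sum_jA_{\phi_j}+dA^{[4]}_{\phi_1}$ the ratio is $(c^2+mb^2+d^2)/(c+3mb+5d)^2\ge 1/(1+9m+25)$, with equality at $(c,b,d)\propto(1,3,5)$. Your $a=1$ choice is the case $c=d=0$.

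A minor point: dihedral, tetrahedral and octahedral forms should be discarded because they satisfy Ramanujan at all unramified primes, so the union trivially has density $1$. Your reason, that ``their moments are only larger'', points the wrong way, since larger moments would weaken the bound.
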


Note that $\lim_{m \to \infty} \frac{1}{26+9m} = 0$. Thus a direct corollary of Theorem \ref{main theorem density finitely many forms} is the following:

\begin{corollary}\label{main result density infinitely many forms}
    Let $\{\phi_i\}_{i \in I}$ be an infinite family of primitive Hecke-Maass cusp forms, whose levels are respectively $N_i$'s for $i\in I$. Assuming $\{p:p \nmid N_j,p \in \RP(\phi_i)\} \neq \{p:p \nmid N_i, p \in \RP(\phi_j)\}$ for any $i,j \in I, i \neq j$, we have 
    \begin{equation}
        d(\cup_{i \in I} \RP(\phi_i))  = 1.
    \end{equation}
\end{corollary}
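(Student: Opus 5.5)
The plan is to deduce the corollary directly from Theorem \ref{main theorem density finitely many forms} by monotonicity of densities under inclusion, with no new analytic input. Let $S := \cup_{i \in I} \RP(\phi_i)$. Since $S \subset \cP$, we have trivially
\[
\overline{d}(S) \leq 1 .
\]
So it suffices to show $\underline{d}(S) \geq 1$. Then $\underline{d}(S) = \overline{d}(S) = 1$, the natural density exists, and it equals $1$.

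For the lower bound, fix an arbitrary integer $m \geq 1$. Because $I$ is infinite, we can choose $m$ distinct indices $i_1, \dots, i_m \in I$. The pairwise hypothesis of the corollary holds for all $i \neq j$ in $I$, so in particular it holds for every pair among $\phi_{i_1}, \dots, \phi_{i_m}$. These $m$ forms therefore satisfy the hypotheses of Theorem \ref{main theorem density finitely many forms}, which gives
\[
\underline{d}\Bigl(\cup_{k=1}^m \RP(\phi_{i_k})\Bigr) \geq 1 - \frac{1}{26+9m}.
\]

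Next we use inclusion. Since $\cup_{k=1}^m \RP(\phi_{i_k}) \subset S$, for every $X$ the counting function of $S$ up to $X$ dominates that of the finite union. Taking $\liminf$ after dividing by $\pi(X)$ gives
\[
\underline{d}(S) \geq 1 - \frac{1}{26+9m}.
\]
This holds for every $m \geq 1$. Letting $m \to \infty$ yields $\underline{d}(S) \geq 1$, which completes the argument.

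There is no real obstacle here. The only points to check are that an infinite $I$ supplies $m$ distinct forms for every $m$, and that the pairwise distinctness hypothesis passes to finite subfamilies. Both are immediate, so the whole analytic content sits in Theorem \ref{main theorem density finitely many forms}.
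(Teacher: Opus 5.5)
Your proposal is correct and follows the paper's own argument: the paper also calls this a direct corollary of Theorem~\ref{main theorem density finitely many forms}, applied to finite subfamilies with $m \to \infty$, using $\frac{1}{26+9m} \to 0$. You also spell out two details the paper leaves implicit: that $\underline{d}$ is monotone under inclusion, and that the trivial bound $\overline{d} \leq 1$ is needed to conclude the natural density exists.
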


From Corollary \ref{main result density infinitely many forms} it can be stated that for almost all primes $p$ (in the sense of natural density), there exists some $i \in I$, such that $\phi_i$ is unramified at $p$, and the Ramanujan conjecture holds for $\phi_i$ at $p$.

\section{Proof of Theorem 1 and 2}
\subsection{Preliminaries}
Before proceeding the proof of the Theorem \ref{1Theorem1smallestRCprime} and Theorem \ref{2Theorem1smallestRCprime}, we review some necessary knowledge of the adjoint square $L$-functions of primitive Hecke-Maass cusp forms. 
Suppose $\phi$ is a primitive Hecke-Maass cusp form of level $N$ and central character $\chi_{\phi}$ whose $n$-th Hecke eigenvalue is denoted as $\lambda_{\phi}(n)$, then the adjoint square $L$-function of $\phi$, denoted by $L(s, \Ad(\phi))$, is defined as the following absolutely convergent Dirichlet series for $\Re(s) > 1$:
\[
    L(s, \Ad(\phi)) := \sum_{\substack{n \geq 1}}\frac{A_{\phi}(n)}{n^s}
\]
where $A_{\phi}(n) = \sum_{k^2|n}\lambda_{\phi}(n^2/k^4)\overline{\chi_{\phi}(n/k^2)}$ if $(n,N)=1$ and $A_{\phi}(n)= 0$ otherwise. The $L$-function $L(s, \Ad(\phi))$ could be analytically continued to the entire complex plane, which satisfies a functional equation of the $s \mapsto 1-s$ type.
Just as $\lambda_{\phi}(\cdot)$, the Dirichlet coefficient $A_{\phi}(\cdot)$ is also multiplicative.
The $L$-function satisfies an Euler product expansion for $\Re s > 1$:
\begin{equation}\label{EulerProductofAd}
    L(s,\Ad(\phi)) = \prod_{p\nmid N}\bigg[1 - \frac{A_{\phi}(p)}{p^s} + \frac{A_{\phi}(p)}{p^{2s}} - \frac{1}{p^{3s}}\bigg]^{-1}
\end{equation}

Suppose the Laplacian eigenvalue of $\phi$ is $\frac{1}{4} + t_{\phi}^2$, we shall set the conductor of $\phi$ as 
\[
    Q_{\phi} := N^2(1 + |t_{\phi}|)^2.
\]
Then, as $\Ad(\phi)$ is shown to be a cusp form for $GL(3)$ in \cite{GelbartJacquetMR533066} whenever $\phi$ is not of dihedral type, we have in this case the following convexity bound (see for example \cite[(5.20)]{IwaniecKowalskiMR2061214} and of \cite[Proposition 3.3]{SpehMullerMR2053600}) for $L(s, \Ad(\phi))$ on vertical lines $\Re s = \sigma$ where $0 < \sigma < 1$:
\begin{equation}\label{convexitybound}
    L(\sigma + it, \Ad(\phi)) \ll_{\epsilon} \big[Q_{\phi}\cdot (1 + |t|)^3\big]^{\frac{1-\sigma}{2} + \epsilon}.
\end{equation}

Now we prove Theorem 1.
We only need to determine an upper bound for $y \geq 2$, where $y$ is taken such that for any prime number $p \leq y$ with $p\nmid N_1N_2$, at least one of $|\lambda_{\phi_1}(p)|$ and $|\lambda_{\phi_2}(p)|$ is greater than $2$. We may also assume that both $\phi_1$ and $\phi_2$ are not dihedral, as otherwise either one of $\phi_1$ or $\phi_2$ already satisfies the Ramanujan conjecture at all unramified primes.

For this purpose, as in \cite{RamakrishnanMR1453061} and \cite{LuoZhouMR3928043}, we firstly pick up the crucial observation that: if the Ramanujan conjecture is not satisfied at a prime $p\nmid N_i$, then the (twisted) Hecke eigenvalue shall be large at $p^{2}$ by satisfying: $\lambda_{\phi_i}(p^2)\overline{\chi_{\phi_i}(p)} > 3$, $i=1,2$. This follows directly from the Hecke relation
\begin{equation}\label{>3atp^2}
    \lambda_{\phi_i}(p^2)\overline{\chi_{\phi_i}(p)} = |\lambda_{\phi_i}(p)|^2 - 1
\end{equation}
in which the right hand side will be greater than 3 if $|\lambda_{\phi_i}(p)| > 2$, and is not less than $-1$ at all time as $|\lambda_{\phi_i}(p)|^2 \geq 0$. We shall hence consider a relevant summation involving $\lambda_{\phi_i}(p^2)\overline{\chi_{\phi_i}(p)}$'s  whose length is controlled by a parameter $x$ which is associated to $y$. Denoting this summation as $S(x)$ for now, we determined an upper bound for $S(x)$ using the analytic properties of the adjoint sqaure $L$-functions for $\phi_1$ and $\phi_2$, and a lower bound by utilizing the observation \eqref{>3atp^2} and the sieving machinery in \cite{MatomakiMR2887873}.

Let $y$ be as before such that for any prime number $p \leq y$ we have $|\lambda_{\phi_1}(p)| > 2$ or $|\lambda_{\phi_2}(p)| > 2$, and we set $x = y^U$ where $U \geq 1$ is to be determined later. We define $B(\cdot)$ as the Dirichlet convolution of $A_{\phi_1}(\cdot)$ and $A_{\phi_2}(\cdot)$:  
\[
    B(n) := [A_{\phi_1} * A_{\phi_2}](n) 
    = \sum_{r|n}A_{\phi_1}(r)A_{\phi_2}(n/r).
\]
Hence $B(\cdot)$ is again multiplicative, and 
\[
    \sum_{\substack{n\geq 1}}\frac{B(n)}{n^s} = L(s, \Ad(\phi_1))\cdot L(s, \Ad(\phi_2))
\]
for $\Re s > 1$.
In particular, for $p\nmid N_1N_2$, 
\begin{equation*}\label{B()atprime}
    B(p) = A_{\phi_1}(p) + A_{\phi_2}(p) = \lambda_{\phi_1}(p^2)\overline{\chi_{\phi_1}(p)} + \lambda_{\phi_2}(p^2)\overline{\chi_{\phi_2}(p)}
\end{equation*}
as $A_{\phi_{i}}(p) = \lambda_{\phi_i}(p^2)\overline{\chi_{\phi_i}(p)}$ for $i = 1,2$.
By \eqref{>3atp^2} we have $B(p) > 3 + (-1) = 2$ if \eqref{simRCequation} fails, and $B(p) \geq -2$ in all cases.

We shall be considering the following:
\[
    S(x) := \psum_{\substack{1 \leq n \leq x \\ (n,N_1N_2)=1}}B(n)\log(x/n),
\]
where $\psum$ denotes summing over square-free numbers.

We shall derive an upper bound for $S(x)$ of the shape
\[
    x^{\sigma+\epsilon}\cdot Q_{\phi_1}^{\frac{1-\sigma}{2}+\epsilon}Q_{\phi_2}^{\frac{1-\sigma}{2}
    +\epsilon}
\]
where $\sigma < 1$ for $\epsilon >0$ small,
as well as a lower bound 
\[
    S(x) \gg_U x(\log x)^2
\]
where $U \geq 1$ is a suitable constant to be determined.
From the these bounds Theorem \ref{1Theorem1smallestRCprime} follows directly.

\subsection{Upper bound for $S(x)$}
Using the analytic properties of $L(s, \Ad(\phi_i))$'s, we shall prove the following:
\begin{lemma}\label{UpperBoundforS(x)}
    We have for any $\frac{23}{32} + \epsilon< \sigma < 1$: 
    \[
        S(x) \ll_{\epsilon} x^{\sigma+\epsilon}\cdot Q_{\phi_1}^{\frac{1-\sigma}{2}+\epsilon}Q_{\phi_2}^{\frac{1-\sigma}{2}+\epsilon}
    \]
    where the implied constant depends only on $\epsilon > 0$.
\end{lemma}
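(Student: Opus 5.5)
The plan is to write the square-free Dirichlet series of $B(n)$, restricted to $(n,N_1N_2)=1$, as $L(s,\Ad(\phi_1))L(s,\Ad(\phi_2))$ times a correction factor. That factor will be an Euler product converging absolutely in a half-plane containing $\Re s>\frac{23}{32}$. Perron's formula with the kernel $\log(x/n)$ then gives
\[
S(x)=\frac{1}{2\pi i}\int_{(2)}\frac{x^s}{s^2}\,F(s)\,ds,
\qquad
F(s):=\psum_{(n,N_1N_2)=1}\frac{B(n)}{n^s},
\]
where the kernel $x^s/s^2$ makes the integral absolutely convergent. Since $B$ is multiplicative, we have
\[
F(s)=\prod_{p\nmid N_1N_2}\Big(1+\frac{B(p)}{p^s}\Big).
\]
We write $F(s)=L^{(N_1N_2)}(s,\Ad(\phi_1))L^{(N_1N_2)}(s,\Ad(\phi_2))\,H(s)$, where $L^{(N_1N_2)}$ removes the Euler factors at $p\mid N_1N_2$. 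From \eqref{EulerProductofAd}, the local factor of $H$ at $p\nmid N_1N_2$ is
\[
H_p(s)=\Big(1+\frac{B(p)}{p^s}\Big)\prod_{i=1,2}\Big(1-\frac{A_{\phi_i}(p)}{p^s}+\frac{A_{\phi_i}(p)}{p^{2s}}-\frac{1}{p^{3s}}\Big).
\]
In this product the $p^{-s}$ terms cancel, since $B(p)=A_{\phi_1}(p)+A_{\phi_2}(p)$. So $H_p(s)=1+O(\cdots p^{-2s})$, with coefficients polynomial in $A_{\phi_i}(p)$.

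By the Kim–Sarnak bound \eqref{KimSarnakboundforphi}, $|A_{\phi_i}(p)|\le 3p^{7/32}$ up to a constant. The worst $p^{-2s}$ coefficient is of size $|A_{\phi_1}(p)A_{\phi_2}(p)|$, roughly $p^{14/32}$, as are terms like $A(p)B(p)$. Hence $|H_p(s)-1|\ll p^{14/32-2\sigma}$. Higher terms behave similarly, e.g. $p^{-3\sigma}$ with coefficient of size about $p^{21/32}$, giving $p^{21/32-3\sigma}$. All of these are summable exactly when $2\sigma-\frac{14}{32}>1$, that is $\sigma>\frac{23}{32}$. Therefore $H(s)$ converges absolutely and is bounded, uniformly in the forms, on $\Re s\ge\frac{23}{32}+\epsilon$. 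The same Kim–Sarnak input bounds the removed local factors at $p\mid N_1N_2$: their reciprocals are polynomials with bounded degree and coefficients $\ll p^{7/32}$, hence $\ll (N_1N_2)^{\epsilon}$ for $\Re s\ge \frac{23}{32}$. Alternatively one can absorb them into $Q^{\epsilon}$ via the divisor bound.

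Next we shift the contour to $\Re s=\sigma$ with $\frac{23}{32}+\epsilon<\sigma<1$. We may assume neither form is dihedral, as arranged before the lemma, so both $L(s,\Ad(\phi_i))$ are entire by \cite{GelbartJacquetMR533066}. We assume also that $F$ has no pole at $s=0$ on the region crossed, which holds because we never go left of $\frac{23}{32}>0$. Thus no residues arise. Horizontal segments at height $T\to\infty$ vanish thanks to the $|s|^{-2}$ decay against polynomial growth, the latter coming from convexity and Phragmén–Lindelöf between $\Re s=\sigma$ and $\Re s=2$. On the line we apply the convexity bound \eqref{convexitybound} to each factor:
\[
S(x)\ll x^{\sigma}(N_1N_2)^{\epsilon}\int_{\mathbb{R}}\frac{[Q_{\phi_1}(1+|t|)^3]^{\frac{1-\sigma}{2}+\epsilon}[Q_{\phi_2}(1+|t|)^3]^{\frac{1-\sigma}{2}+\epsilon}}{|\sigma+it|^2}\,dt.
\]
The $t$-exponent in the integrand is $3(1-\sigma)+6\epsilon-2<-1$ for $\sigma>\frac{23}{32}$ and small $\epsilon$, since $3\cdot\frac{9}{32}<1$. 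So the integral converges and gives the claimed bound, after absorbing $(N_1N_2)^\epsilon\le Q_{\phi_1}^\epsilon Q_{\phi_2}^\epsilon$ and writing $x^\sigma\le x^{\sigma+\epsilon}$.

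The main obstacle is the bookkeeping for the local factor $H_p$. One must verify that every coefficient of $p^{-ks}$ for $k\ge2$ is $O(p^{7k/32})$, so that the absolute convergence threshold is exactly $\frac{23}{32}$ and all bounds are uniform in $\phi_1,\phi_2$. The expansion has finitely many terms, with degree at most $1+3+3$ in $p^{-s}$. The binding constraint is the $k=2$ term, and the requirement $\sigma>\frac{23}{32}$ in the statement matches it exactly.
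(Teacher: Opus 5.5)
Your proposal is correct and follows essentially the same route as the paper. You factor the square-free series $\psum_{(n,N_1N_2)=1} B(n)n^{-s}$ as $L(s,\Ad(\phi_1))L(s,\Ad(\phi_2))$ times an Euler product that converges absolutely for $\Re s>\frac{23}{32}$ by Kim--Sarnak (the paper packages your $H(s)$ and the removed local factors at $p\mid N_1N_2$ into a single $G(s)$), then apply Perron with kernel $x^s/s^2$, shift to $\Re s=\sigma$, and invoke the convexity bound. Your explicit checks that the $t$-integral converges (because $3(1-\sigma)<1$) and that the horizontal segments vanish fill in details the paper leaves implicit.
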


\begin{proof}
    We define the following auxiliary Euler product:
    \begin{align*}
        G(s)  & := \prod_{p\nmid N_1N_2} \bigg[\bigg(1 - \frac{A_{\phi_1}(p)}{p^s} + \frac{A_{\phi_1}(p)}{p^{2s}} - \frac{1}{p^{3s}}\bigg) \bigg(1 - \frac{A_{\phi_2}(p)}{p^s} + \frac{A_{\phi_2}(p)}{p^{2s}} - \frac{1}{p^{3s}}\bigg)\bigg]\cdot \bigg[1 + \frac{B(p)}{p^s}\bigg] \\
        & \times \prod_{\substack{p\nmid N_1\\p|N_2}}\bigg(1 - \frac{A_{\phi_1}(p)}{p^s} + \frac{A_{\phi_1}(p)}{p^{2s}} - \frac{1}{p^{3s}}\bigg)\cdot 
        \prod_{\substack{p\nmid N_2\\p|N_1}}\bigg(1 - \frac{A_{\phi_2}(p)}{p^s} + \frac{A_{\phi_2}(p)}{p^{2s}} - \frac{1}{p^{3s}}\bigg)
    \end{align*}
    By \eqref{KimSarnakboundforphi} and \eqref{>3atp^2}, we have 
    \begin{equation}\label{KimSarnakBoudnforAp}
        |A_{\phi_i}(p)| \leq p^{\frac{7}{32}} + p^{\frac{-7}{32}} + 1.
    \end{equation}
    Then, upon expanding the factors and making use of \eqref{KimSarnakBoudnforAp}, we see that for $p\nmid N_1N_2$ we have
    \begin{align*}
        \bigg[\bigg(1 - \frac{A_{\phi_1}(p)}{p^s} + \frac{A_{\phi_1}(p)}{p^{2s}} - \frac{1}{p^{3s}}\bigg) \bigg(1 - \frac{A_{\phi_2}(p)}{p^s} + \frac{A_{\phi_2}(p)}{p^{2s}} - \frac{1}{p^{3s}}\bigg)\bigg]\cdot \bigg[1 + \frac{B(p)}{p^s}\bigg] 
        = 1 + O(p^{-2\sigma + \frac{7}{16}}),
    \end{align*}
    where $\sigma := \Re(s)$.
    We therefore conclude that for $\sigma > \frac{23}{32}+\epsilon$, $G(s)$ converges absolutely and we have
    \[
        |G(s)| \ll \tau(N_1)\tau(N_2)\cdot \prod_{p\nmid N_1N_2}(1 + O(p^{-1 - 2\epsilon})) \ll_{\epsilon} (N_1N_2)^{\epsilon},
    \]
    where the implied constant depends only on $\epsilon > 0$ and $\tau(\cdot)$ is the divisor function.
    
    Now, as for $\sigma > 1$ we have 
    \[
        L(s,\Ad(\phi_1))L(s,\Ad(\phi_2))G(s) = \psum_{\substack{n \geq 1 \\ (n,N_1N_2)=1}}\frac{B(n)}{n^s}
    \]
    by \eqref{EulerProductofAd}, for $c>1$ we could relate $S(x)$ to a line integral as
    \begin{align*}
        S(x) & = \frac{1}{2\pi i}\int_{(c)}L(s,\Ad(\phi_1))L(s,\Ad(\phi_2))G(s)\frac{x^s}{s^2}ds.
    \end{align*}
    As $L(s, \Ad(\phi_1)), L(s, \Ad(\phi_2))$ are continued and $G(s)$ is absolutely convergent for $\sigma > \frac{23}{32} + \epsilon$, we may shift the integral line left to any vertical line with real part $\sigma$ satisfying $\frac{23}{32} +\epsilon < \sigma <1$ and conclude
    \[
        S(x) \ll_{\epsilon} x^{\sigma+\epsilon}\cdot (Q_{\phi_1}Q_{\phi_2})^{\frac{1-\sigma}{2} + \epsilon}
    \]
    in the view of the convexity bounds for $\Ad(\phi_1)$ and $\Ad(\phi_2)$ as in \eqref{convexitybound}.
    
\end{proof}

\subsection{Lower bound for $S(x)$}
Following \cite[Lemma A.2, Lemma A.3]{LuoZhouMR3928043}, we define a multiplicative function $h(\cdot)$ supported on square-free positive integers, by setting $h(1) = 1$ and
\[
h(p) =
\begin{cases}
  2,  & p \le y, \\
 -2,  & p > y
\end{cases}
\]
 and then multiplicatively extend to all square-free numbers. We set $h(n) = 0$ if $n$ is not square-free.
    
In the following lemma, we transform the problem of bounding $S(x)$ from below to that of bounding a summation of $h(n)$. 
\begin{lemma}\label{LowerBoundLemma}
    Let $z > 1$.
    If for all $t \leq z$ and positive integers $r \leq z$, we have $\sum_{\substack{n\leq t \\ (n,rN_1N_2)=1}}h(n) \geq 0$, then $S(z) \geq \sum_{\substack{n \leq z \\ (n,N_1N_2)=1}}h(n)\log(z/n)$.
\end{lemma}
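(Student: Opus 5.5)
The plan is to factor the square-free part of $B$ as a Dirichlet convolution of $h$ with a \emph{nonnegative} multiplicative function, and then to reduce $S(z)$ to a nonnegative combination of partial sums of $h$, each of which is controlled by the hypothesis.

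First I define a multiplicative function $g$ supported on square-free integers coprime to $N_1N_2$ by $g(1)=1$ and
\[
g(p) := B(p) - h(p), \qquad p \nmid N_1N_2 .
\]
By \eqref{>3atp^2}, $A_{\phi_i}(p)=|\lambda_{\phi_i}(p)|^2-1$ is real, so $B(p)$ is real. There are two cases:
\begin{itemize}
\item For $p\le y$ with $p\nmid N_1N_2$, at least one $|\lambda_{\phi_i}(p)|>2$. Then $B(p)>3+(-1)=2=h(p)$, so $g(p)>0$.
\item For $p>y$, we always have $B(p)\ge -2=h(p)$, so $g(p)\ge 0$.
\end{itemize}
Hence $g(d)\ge 0$ for every $d$. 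Now take $n$ square-free and coprime to $N_1N_2$. Both sides of the following identity are multiplicative over the primes dividing $n$, and at a single prime the identity reads $B(p)=h(p)+g(p)$, so
\[
B(n)=\prod_{p\mid n}\big(h(p)+g(p)\big)=\sum_{dm=n} g(d)h(m).
\]
Writing a square-free $n$ as $n=dm$ is the same as choosing square-free $d$ and $m$ with $(d,m)=1$. This lets me rewrite
\[
S(z)=\sum_{\substack{d\le z\\ (d,N_1N_2)=1}} g(d) \sum_{\substack{m\le z/d\\ (m,dN_1N_2)=1}} h(m)\log\frac{z}{dm}.
\]
The restriction of $m$ to square-free values is automatic because $h$ vanishes off square-free integers.

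Next I show that each inner sum is nonnegative. By partial summation,
\[
\sum_{\substack{m\le z/d\\ (m,dN_1N_2)=1}} h(m)\log\frac{z/d}{m}=\int_1^{z/d}\Big(\sum_{\substack{m\le t\\ (m,dN_1N_2)=1}}h(m)\Big)\frac{dt}{t}.
\]
In every integrand we have $t\le z/d\le z$ and $r=d\le z$, so the hypothesis of the lemma gives that each integrand is $\ge 0$. Therefore every inner sum is $\ge 0$. Since also $g(d)\ge 0$, I may drop all terms with $d>1$ and keep only $d=1$, where $g(1)=1$. This gives
\[
S(z)\ge \sum_{\substack{n\le z\\ (n,N_1N_2)=1}}h(n)\log(z/n).
\]

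The only delicate point I expect is the bookkeeping in the convolution. One has to check that restricting to square-free $n$ on the $B$ side matches exactly the pairs $(d,m)$ with $d,m$ square-free and coprime. The coprimality condition $(m,d)=1$ is precisely what forces the hypothesis to be stated with the extra modulus $r$, here $r=d$, and not just with $N_1N_2$. It is also worth noting that the inequality $B(p)>2$ for $p\le y$ is exactly where the choice of $y$ from the setup of Theorem \ref{1Theorem1smallestRCprime} enters.
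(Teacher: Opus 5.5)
Your proof is correct and follows the paper's argument. You use the same nonnegative multiplicative $g$ with $g(p)=B(p)-h(p)$, the same decomposition of square-free $n$ as $dm$ with $(m,dN_1N_2)=1$, and the same step of discarding all $d>1$ terms via the hypothesis with $r=d$. The only difference is cosmetic: the paper first bounds the unweighted partial sum $S_0(t)$ of $B$ from below and then integrates against $dt/t$, whereas you keep the weight $\log(z/n)$ and apply partial summation to each inner sum before dropping terms. This is the same computation with summation and integration interchanged.
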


\begin{proof}
    We firstly work with $S_0(z)$, which is defined as
    \[
        S_0(z) := \psum_{\substack{1 \leq n \leq z \\ (n,N_1N_2)=1}}B(n).
    \]
    In view of the Mobius inversion, we define a new multiplicative function $g(\cdot)$ by
    \[
        B(n) = \sum_{d|n}h(d)g(n/d).
    \]
    In particular $g(1) = 1$ and $g(p) = B(p) - h(p) \geq 0$ for $p\nmid N_1N_2$. 

    Thus for any $1 < t \leq z$
    \begin{align*}
        S_0(t) & = \psum_{\substack{n\leq t \\ (n,N_1N_2)=1}} \sum_{d|n}h(d)g(n/d) \\
        & = \psum_{\substack{r\leq t \\ (r,N_1N_2)=1}}g(r)\sum_{\substack{d\leq t/r\\ (d,rN_1N_2)=1}}h(d) \\
        & \geq \sum_{\substack{n\leq t \\ (n,N_1N_2)=1}}h(n),
    \end{align*}
    where in the last step by the positivity of $g(\cdot)$ over square-free numbers and the positivity assumption on $h(\cdot)$ we dropped all the terms in the double summation except for that corresponding to $r = 1$.

    Finally, as $S(z) = \int^{z}_1S_0(t)\frac{dt}{t}$ and 
    $\sum_{\substack{n\leq z\\ (n,N_1N_2)=1}}h(n)\log(z/n) = \int^{z}_1\big(\sum_{\substack{n\leq t\\ (n,N_1N_2)=1}}h(n)\big)\frac{dt}{t}$
    by partial summation, we conclude 
    \[
        S(z) \geq \sum_{\substack{n\leq z \\ (n,N_1N_2)=1}}h(n)\log(z/n).
    \]
\end{proof}

The following lemma guarantees the required positivity in the assumption of Lemma \ref{LowerBoundLemma} by providing an asymptotic formula for mean value of $h(\cdot)$.

\begin{lemma}\label{AsymptoticFormulasforh()}
    Let $U \geq 1$ and $y,h(\cdot)$ be as before, and let $r \leq y^U$ be a positive integer, we then have 
    \[
        \sum_{\substack{n\leq y^u \\ (n, 
        rN_1N_2)=1}}h(n) = c(rN_1N_2)\cdot \big[\sigma_2(u) + o_{U}(1)\big](\log y)y^u
    \]
    uniformly for $u \in [U^{-1}, U]$, where $\lim_{y\rightarrow \infty}o_U(1) = 0$ and 
    \[
        c(a) = \frac{\varphi(a)^2}{a^2}\prod_{p\nmid a}(1-1/p)^2(1+2/p) \gg (\log\log a)^{-2}.
    \]
    The continuous function $\sigma_2(u)$ is the solution to the differential-difference equation
    \begin{align*}
       & \sigma_2(u) = u, \,\, 0<u\leq 1, \\
       &  \big(u^{-1}\sigma_2(u)\big)' = -\frac{4}{u^2}\sigma_2(u-1),\,\, u> 1.
    \end{align*}
\end{lemma}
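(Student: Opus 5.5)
This is a mean-value theorem for a multiplicative function whose values on primes are constant ($2$ or $-2$) on the two ranges $p\le y$ and $p>y$. I would prove it by the standard Selberg--Delange / Wirsing-type analysis, as in \cite[Lemma A.2, Lemma A.3]{LuoZhouMR3928043}. The only new point is the extra coprimality condition $(n,rN_1N_2)=1$, and it must be handled uniformly in $r\le y^U$. Write $a=rN_1N_2$ and $h_a(n)=h(n)\mathbf{1}_{(n,a)=1}$.

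\textbf{Step 1: the range $u\le 1$.} Here $h_a$ is just the square-free function with $h_a(p)=2$ for $p\nmid a$. Its Dirichlet series factors as
\[
\sum_n h_a(n)n^{-s}=\zeta(s)^2H_a(s),\qquad H_a(s)=\prod_{p\mid a}(1-p^{-s})^2\prod_{p\nmid a}(1-p^{-s})^2(1+2p^{-s}),
\]
and $H_a$ is absolutely convergent for $\Re s>1/2$. The standard convolution (or Selberg--Delange) argument then gives $\sum_{n\le X}h_a(n)=H_a(1)X\log X+O(X\log\log a)$ uniformly in $a$. We have $H_a(1)=c(a)$. Taking $X=y^u$ gives the main term $c(a)\,u\,(\log y)y^u$, which matches $\sigma_2(u)=u$.

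The error term has to be $o(c(a)\log y)\cdot y^u$ uniformly. This holds because $c(a)\gg(\log\log a)^{-2}$ and $\log\log a\ll\log\log y+\log\log(N_1N_2)$. For the term involving $N_1N_2$ I would either note that it is controlled in the application, or absorb the factor $\prod_{p\mid N_1N_2}(1-1/p)^{-2}$ in a sieve-type bound. The lower bound $c(a)\gg(\log\log a)^{-2}$ itself follows from Mertens: $\varphi(a)/a\gg 1/\log\log a$.

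\textbf{Step 2: the range $u>1$.} Write $M_a(X)=\sum_{n\le X}h_a(n)$ and use the identity $h(n)\log n=\sum_{pm=n,\,p\nmid m}h(p)h(m)$. This gives
\[
\int_1^X \frac{M_a(t)}{t}\,dt\text{-type relations:}\qquad \sum_{n\le X}h_a(n)\log n=\sum_{p\le X,\,p\nmid a}h(p)\log p\; M_a(X/p)+O(\text{square-full corrections}).
\]
Split the prime sum at $p\le y$ and $p>y$. Write $h(p)=2-4\cdot\mathbf{1}_{p>y}$. The part with weight $2$ reproduces the Dirichlet divisor-type main term, exactly as for $u\le1$. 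The correction is $-4\sum_{y<p\le X}\log p\,M_a(X/p)$.

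Insert the inductive asymptotic for $M_a(X/p)$ with $X/p=y^{u-v}$, and apply the prime number theorem in the form $\sum_{p\le t}\log p\sim t$. The removal of primes dividing $a$ costs at most $\log a\ll U\log y$ many terms weighted by $\log p$. Since these primes are few, this is negligible only after a short argument, for instance by bounding $\sum_{p\mid a,\,p>y}\log p\le\log a$ against the main term of size $y^u\log y$. In the limit the correction becomes an integral equation. Differentiating it gives $(u^{-1}\sigma_2(u))'=-4u^{-2}\sigma_2(u-1)$.

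The bookkeeping is cleanest if one normalizes by $c(a)(\log y)y^u$ and proves, by induction on $\lceil u\rceil\le U$, that $M_a(y^u)/(c(a)y^u\log y)-\sigma_2(u)\to0$ uniformly. Continuity of $\sigma_2$ and uniform convergence on compact intervals make each inductive step go through.

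\textbf{Main obstacle.} The hard part is uniformity in $a$: the error terms in both steps must stay $o(c(a))$ while $a$ may be as large as $y^UN_1N_2$. I would handle this by keeping every error term in the form $O\big(y^u(\log y)^{1-\delta}\prod_{p\mid a}(1+1/p)^{O(1)}\big)$, and then using $\prod_{p\mid a}(1+1/p)\ll\log\log a$ against $c(a)^{-1}\ll(\log\log a)^2$. If this becomes awkward, I would fall back on citing the general theorem on multiplicative functions with prescribed prime values on intervals, as used in \cite{MatomakiMR2887873}, and check that its constants are uniform in the coprimality modulus.
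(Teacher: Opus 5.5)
The paper gives no argument of its own here. It quotes \cite[Lemma 6]{MatomakiMR2887873} with $K=1$, $x_0=0$, $x_1=1$, $\chi_0=2$, $\chi_1=-2$, and invokes \cite[Remark 7]{MatomakiMR2887873} and \cite[Lemma 4.2]{LauLiuWuMR2901268} to replace $(n,r)=1$ by $(n,rN_1N_2)=1$. Your plan instead reproves the black box. Step 1 goes through $\zeta(s)^2H_a(s)$, and $H_a(1)=c(a)$ is correct. Step 2 goes through $h(n)\log n=\sum_{pm=n}h(p)\log p\,h(m)$. Writing $M_a(y^u)\approx c(a)(\log y)y^u\Sigma(u)$, the limiting relation $u\Sigma(u)=2\int_0^u\Sigma(v)\,dv-4\int_0^{u-1}\Sigma(v)\,dv$ does differentiate to the stated equation for $\sigma_2$. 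The citation is shorter. Your version has the merit of exposing how everything depends on $a=rN_1N_2$, and that is exactly where two repairs are needed.

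First, uniformity in $N_1N_2$ is unattainable: with $o_U(1)$ independent of $N_1N_2$, the lemma is false. Take $N_1N_2=\prod_{p\le y}p$ and $r=1$; the condition defining $y$ is then vacuous. For $u\le 1$ the sum equals $1$, while $c(a)(\log y)y^u\asymp y^u/\log y$. For $1<u<2$ the sum equals $1-2(\pi(y^u)-\pi(y))<0$, while $\sigma_2(u)>0$. So neither your ``sieve-type absorption'' nor your Main-obstacle bookkeeping can succeed as stated; the latter gives relative errors $(\log y)^{-\delta}(\log\log a)^{O(1)}$ with $a$ as large as $y^UN_1N_2$.

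Commit instead to your other option, which is legitimate. In proving Theorem \ref{1Theorem1smallestRCprime} one may assume $y\ge (N_1N_2)^{0.447374}$, since otherwise there is nothing to prove. Then $a\le y^{U+3}$ and $c(a)^{-1}\ll(\log\log y)^2$, so all your error terms are $o(1)$ relative to the main term. The paper's remark that an extra factor $(N_1N_2)^{\epsilon}$ is ``admissible'' tacitly relies on the same kind of reduction. This restriction also rescues Step 1. The plain convolution $h_a=\tau*g_a$ yields an error involving $\prod_{p\mid a}(1+p^{-1/2})^2$. That product is $y^{o(1)}$ when $a\le y^{O(1)}$, but it is not $(\log\log a)^{O(1)}$ in general. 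So that argument does not give your claimed $O(X\log\log a)$ uniformly in $a$.

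Second, the induction on $\lceil u\rceil$ does not close as written. The weight-$2$ part $2\sum_{p\le X}\log p\,M_a(X/p)$ involves $M_a$ itself, not a divisor-type sum. For small $p$ its arguments $X/p$ lie in $(y^{\lceil u\rceil-1},y^u]$, which is the very range being determined. Moreover, every $Cu$ solves the homogeneous equation $u\Sigma(u)=2\int_0^u\Sigma(v)\,dv$, so ``pass to the limit and differentiate'' presupposes that a limit exists.

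You need a stability step. The normalized error $E(u)$ satisfies $uE(u)=2\int_0^uE(v)\,dv+o(1)$ on the new interval: the $-4\int_0^{u-1}E$ term is $o(1)$ by induction, and very small $v$ are handled by $M_a(t)\ll t\log t$. Then $F(u)=\int_0^uE$ obeys $(F(u)/u^2)'=o(1)/u^3$. Since $E=o(1)$ on $[0,1]$ by Step 1, this forces $E=o(1)$ on the new interval.

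Alternatively, compare with $\tau_a^\flat$, the function with weight $2$ at every $p\nmid a$, whose Step 1 asymptotic holds at every scale. Write $h_a=\tau_a^\flat*k_a$, where $k_a$ is supported on $y$-rough integers and $k_a(p)=-4$. For $X\le y^U$ this expresses $M_a(X)$ through finitely many iterated prime sums of known quantities, with no implicit equation.
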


\begin{proof}
    This is \cite[Lemma 6]{MatomakiMR2887873}, in whose notation we take: $K = 1, x_0 = 0, x_1 = 1, \chi_0 = 2, \chi_{1} = -2$. 

    We also remark here that by \cite[Remark 7]{MatomakiMR2887873} and \cite[Lemma 4.2]{LauLiuWuMR2901268}, the original co-primary condition $(n,r)=1$ could be modified to $(n,rN_1N_2)=1$ for $r \leq y^U$, as doing so only creates an extra factor which could be bounded by $(N_1N_2)^{\epsilon}$ for arbitrarily small $\epsilon > 0$ in the right hand side of the weaker assumption prescribed in \cite[Remark 7]{MatomakiMR2887873}, which is admissible for our purpose.
\end{proof}

Lastly, recalling $x = y^U$, we link $\sum_{n\leq x}h(n)$ and $\sum_{n\leq x}h(n)\log (x/n)$ together via partial summation. The lemma below is essentially \cite[Lemma A.5]{LuoZhouMR3928043}.

\begin{lemma}\label{h()partialsummaion}
Let $U \geq 1$ be such that $\sigma_2(u) > 0$ for $1 < u \leq U$, then for
$y\gg_U 1$ we have 
\[
    \sum_{\substack{n \leq y^U \\ (n,N_1N_2)=1}}h(n)\log(y^U/n) \gg_U y^U(\log y)^2.
\]
\end{lemma}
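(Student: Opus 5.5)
The plan is to reduce the weighted sum to an integral of the unweighted partial sums by partial summation, and then feed in the asymptotic formula of Lemma \ref{AsymptoticFormulasforh()} with $r=1$.

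Write $H(t):=\sum_{n\leq t,\,(n,N_1N_2)=1}h(n)$ and $x=y^U$. As in the proof of Lemma \ref{LowerBoundLemma},
\[
\sum_{\substack{n\leq x\\ (n,N_1N_2)=1}}h(n)\log(x/n)=\int_1^x H(t)\frac{dt}{t}=(\log y)\int_0^U H(y^u)\,du ,
\]
where the second equality is the substitution $t=y^u$. I split the $u$-integral at $u=U^{-1}$.

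\emph{The range $0\le u\le U^{-1}$.} Here I use the trivial bound $|h(n)|\le 2^{\omega(n)}$, which gives $|H(t)|\ll t\log(t+2)$. The contribution of this range is therefore $\ll (\log y)^2 y^{1/U}$. This is $o(y^U)$, and the main term dominates it once $y\gg_U 1$.

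\emph{The range $U^{-1}\le u\le U$.} Here Lemma \ref{AsymptoticFormulasforh()} applies uniformly:
\[
H(y^u)=c(N_1N_2)\big[\sigma_2(u)+o_U(1)\big](\log y)\,y^u .
\]
By hypothesis $\sigma_2>0$ on $(1,U]$. Also $\sigma_2(u)=u>0$ on $(0,1]$, and $\sigma_2$ is continuous, so $m_U:=\min_{[U-1/2,\,U]}\sigma_2>0$. For $y\gg_U 1$ the $o_U(1)$ term is at most $m_U/2$ there. Dropping the part of $[U^{-1},U-\tfrac12]$ requires that part to be nonnegative. This holds for the same reason: $\sigma_2>0$ on all of $[U^{-1},U]$, so for large $y$ the bracket is positive on the whole range. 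The remaining range then contributes at least
\[
\tfrac12 m_U\, c(N_1N_2)\,(\log y)^2\int_{U-1/2}^{U}y^u\,du\;\gg_U\; c(N_1N_2)(\log y)^2\cdot\frac{y^U}{\log y}.
\]

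The hardest step is the power of $\log y$, and I flag it explicitly. The computation above gives the order $y^U\log y$, not $y^U(\log y)^2$, and this is the true order. Indeed $|h(n)|\le 2^{\omega(n)}$ and $\sum_{n\le x}2^{\omega(n)}\log(x/n)\asymp x\log x$, so the left side is always $O_U(y^U\log y)$. The displayed bound with $(\log y)^2$ therefore cannot hold as literally written for large $y$; the correct conclusion is
\[
\sum_{\substack{n\leq y^U\\ (n,N_1N_2)=1}}h(n)\log(y^U/n)\gg_U c(N_1N_2)\,y^U\log y .
\]
This suffices for the application to Theorem \ref{1Theorem1smallestRCprime}. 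Comparing it with Lemma \ref{UpperBoundforS(x)} only needs a lower bound of size $x^{1-\epsilon}$. The factor $c(N_1N_2)\gg(\log\log N_1N_2)^{-2}$ and the log powers are absorbed into the $\epsilon$'s, so the exponent $0.447374$ is unaffected. Finally, the positivity of $\sigma_2$ on $[U^{-1},U]$ and Lemma \ref{AsymptoticFormulasforh()} for all $r\le y^U$ together supply the hypothesis of Lemma \ref{LowerBoundLemma}. That hypothesis needs $H\ge 0$ also for $t<y^{1/U}$; there I would use $h(p)=2$ for all $p\le y$, so that for $t\le y$ the sum $H(t)$ has only positive terms.
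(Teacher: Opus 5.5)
Your argument follows the paper's proof: partial summation to $(\log y)\int_0^U H(y^u)\,du$, then Lemma~\ref{AsymptoticFormulasforh()} on $[U^{-1},U]$.

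Your objection to the power of $\log y$ is correct. The paper's last step asserts $\gg_U c(N_1N_2)\,y^U(\log y)^2$, but it overlooks that $\int_{U^{-1}}^{U}y^u\,du\asymp_U y^U/\log y$. The argument therefore only yields $c(N_1N_2)\,y^U\log y$. Your bound via $|h(n)|\le 2^{\omega(n)}$ shows this is the true order, so the displayed inequality cannot hold as written. It also drops the factor $c(N_1N_2)$ that the proof carries. As you say, the corrected bound is all Theorem~\ref{1Theorem1smallestRCprime} needs, and the exponent is unaffected.

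Two minor points on the small range.
\begin{itemize}
\item The paper discards $0\le u\le U^{-1}$ using $H(t)\ge 0$ for $t\le y$, which you already observe at the end. This is preferable to your trivial estimate because it introduces no error term at all. Your estimate has to beat a main term carrying the possibly small factor $c(N_1N_2)$, and is negligible only for $U>1$.
\item When $U-\frac12<U^{-1}$, shrink $[U-\frac12,U]$ to $[U^{-1},U]$ so that Lemma~\ref{AsymptoticFormulasforh()} actually applies on the interval you use.
\end{itemize}
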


\begin{proof}
    Defining $H(t) := \sum_{\substack{n \leq t \\ (n,N_1N_2)=1}}h(n)$, by partial summation we have that 
    \begin{align*}
        \sum_{\substack{n \leq y^U \\ (n,N_1N_2)=1}}h(n)\log(y^U/n) & = \int^{y^U}_{1}H(t)\frac{dt}{t} \\
        & = \int^{U}_{0}H(y^u)\log(y) du \geq \int^{U}_{U^{-1}}H(y^u)\log (y)du,
    \end{align*}
    where the last inequality follows from the fact that for $p \leq y$ and $p\nmid N_1N_2$, $h(p) =2$
    and $U\geq 1$.
    On the other hand, uniformly for $U^{-1} \leq u \leq U$ Lemma \ref{AsymptoticFormulasforh()} gives
    \[
        H(y^u) = c(N_1N_2)(\sigma_2(u) + o_U(1))\cdot (\log y)y^u.
    \]
    Thus for $y \gg_U 1$ large, we conclude
    \[
        \int^{U}_{U^{-1}}H(y^u)\log (y)du \gg_U c(N_1N_2)y^U(\log (y))^{2}
    \]
\end{proof}

Notice that the above implies that if $U$ is taken such that the assumptions in Lemma \ref{h()partialsummaion} are satisfied (notice that by Lemma \ref{AsymptoticFormulasforh()} the assumptions in Lemma \ref{LowerBoundLemma} are also satisfied with $z = x = y^U$), then for $y$ larger than a constant (absolute once $U$ is fixed), we may invoke Lemma \ref{LowerBoundLemma} to conclude 
\[
    S(x) = S(y^U) \geq \sum_{\substack{n \leq y^U \\ (n,N_1N_2)=1}}h(n)\log(y^U/n) \gg y^U(\log y)^2 \gg x(\log x)^2.
\]
This provides the desired lower bound for $S(x)$.

\subsection{Proof of Theorem \ref{1Theorem1smallestRCprime}}
We are now ready to prove Theorem 1.
\begin{proof}
    We recall that for $u \in (0,1]$ we have $\sigma_2(u) > 0$, and a numerical computation using \textit{Mathematica} shows that the first positive 
zero of $\sigma_2(u)$ is 2.23528..., and we thus take $U$ to be $U = 2.23527$. By the intermediate value theorem for continuous functions, $\sigma_2(u)$ is positive on the interval $(0, 2.23527]$ and hence so do $H(y^U)$ given $y$ larger than a computable absolute constant.
Now, combining Lemma \ref{UpperBoundforS(x)}, Lemma \ref{LowerBoundLemma} and Lemma \ref{h()partialsummaion},
we conclude that 
\[
    y = x^{\frac{1}{U}} \ll_{\epsilon} (Q_{\phi_1}Q_{\phi_2})^{\frac{1}{2U}+\epsilon} \ll [N_1N_2(1 + |t_{\phi_1}|)(1 + |t_{\phi_2}|)]^{0.447374},
\]
where the implied constant in the last "$\ll$" is absolute once we fix an $\epsilon > 0$ small enough, completing the proof of Theorem \ref{1Theorem1smallestRCprime}.    
\end{proof}

\subsection{Proof of Theorem \ref{2Theorem1smallestRCprime}}
We then turn to the proof of Theorem 2, which is parallel to that of Theorem 1 with slight modifications.
\begin{proof}
Letting $\phi_i, i = 1,2,3$ be three primitive Hecke-Maass cusp forms as before,
    then we have 
    \[
        \sum_{i=1}^3 \lambda_{\phi_i}(p^2)\overline{\chi_{\phi_i}(p)} > 1.
    \]
    Setting $y$ similarly as before, defining $h(\cdot)$ as 
    \[
        h(p) = \begin{cases}
                1, &  p \leq y \\
                -3, & p > y
                \end{cases}
    \]
    for $p \nmid N_1N_2N_3$, and relating to $\prod_{i=1}^3L(s, \Ad(\phi_i))$ for the upper bound 
    for the relevant sum and shift the integral line in Lemma \ref{UpperBoundforS(x)} to $\Re(s) = \sigma$ with $\frac{53}{64} +\epsilon < \sigma < 1$, we shall be able to find the prescribed prime in Theorem \ref{2Theorem1smallestRCprime} with 
    \[
        p \ll \big[\prod_{i=1}^3N_i(1+|t_{\phi_i}|)\big]^{0.778798}
    \]
    where the implied constant is absolute.
    This completes the proof of Theorem \ref{2Theorem1smallestRCprime}.
\end{proof}
    
\begin{remark}
    However, for $l = |\{\text{primitive Hecke-Maass cusp form}\}| \geq 4$, the above argument 
    fails, as we are no longer able to guarantee that 
    \[ 
        \sum_{i=1}^{l}\lambda_{\phi_{i}}(p^2)\overline{\chi_{\phi_i}(p)} > 0
    \]
    when at least one of $\{|\lambda_{\phi_i}(p)|\}_{i=1}^l$'s greater than 2.
\end{remark}

\section{Proof of Theorem \ref{main theorem density two forms} and \ref{main theorem density finitely many forms}}
Throughout this section, we assume that all the involved primitive Hecke-Maass cusp forms  are not of dihedral, tetrahedral or octahedral type, as otherwise the Ramanujan conjecture is already satisfied at every unramified primes.

\subsection{Preliminaries }

 We introduce the following notation for the twisted symmetric fourth power of a primitive Hecke-Maass cusp form $\phi$ of level $N$ with central character $\chi_{\phi}$:
\[
    A^4(\phi) := \Sym^4(\phi)\otimes \overline{\chi_{\phi}}^2.
\]
 We also denote the usual symmetric $m$-th power of $\phi$ by $\Sym^m(\phi)$. In particular, under our assumption that $\phi$ is not of dihedral, tetrahedral or octahedral type, $\Sym^3(\phi)$ and $A^4(\phi)$ correspond to cuspidal automorphic representations on $GL_4(\mathbb{A}_{\mathbb{Q}})$ and $GL_5(\mathbb{A}_{\mathbb{Q}})$ respectively (see \cite{Kim2003},\cite{KimShahidi2002} and \cite{KimShahidi2MR1923967}). Denoting the local parameters for $\phi$ at $p\nmid N$ to be $\{\alpha_p, \beta_p\}$, then the local parameters for $\Ad(\phi)$, $\Sym^3(\phi)$ and $A^4(\phi)$ at $p$ are respectively given by 
 \[
    \{\alpha_p/\beta_p, 1, \beta_p/\alpha_p\}\,,\{\alpha_p^3, \alpha_p^2\beta_p, \alpha_p\beta_p^2, \beta_p^3\},
 \]
 and 
 \[
    \{\alpha_p^2/\beta_p^2, \alpha_p/\beta_p, 1, \beta_p/\alpha_p, \beta_p^2/\alpha_p^2\}.
 \]
 Just as $\Ad(\phi)$, we may define the $L$-functions for $\Sym^3(\phi)$ and $A^4(\phi)$ by Euler products, which could be further expanded as Dirichlet series:
 \begin{align*}
     L(s,\Sym^3(\phi)) & := \prod_{p\nmid N}\big[(1-\alpha_p^3p^{-s})(1-\alpha_p^2\beta_pp^{-s})(1-\alpha_p\beta_p^2p^{-s})(1-\beta_p^3p^{-s})
     \big]^{-1} \\
     & = \sum_{\substack{n\geq 1 \\ (n,N)=1}}A^{[3]}_{\phi}(n)n^{-s}
 \end{align*}
 and 
  \begin{align*}
     L(s,A^4(\phi)) & := \prod_{p\nmid N}\big[(1-\alpha_p^2\beta_p^{-2}p^{-s})(1-\alpha_p\beta_p^{-1}\beta_pp^{-s})(1-p^{-s})(1-\beta_p\alpha_p^{-1}p^{-s})(1-\beta_p^2\alpha_p^{-2}p^{-s})
     \big]^{-1} \\
     & = \sum_{\substack{n\geq 1 \\ (n,N)=1}}A^{[4]}_{\phi}(n)n^{-s},
 \end{align*}
 for $\Re(s) > 1$.

To prove Theorem \ref{main theorem density two forms} and Theorem \ref{main theorem density finitely many forms}, we firstly establish some necessary lemmas. 
\begin{lemma}\label{small lemma}
    Let $\phi_1,\phi_2$ be primitive Hecke-Maass cusp forms of level $N_1$ and $N_2$. Assuming $\{p:p \nmid N_2,p \in \RP(\phi_1)\} \neq \{p:p \nmid N_1, p \in \RP(\phi_2)\}$, we have $\phi_1 \neq \phi_2$, $\Ad(\phi_1) \neq \Ad(\phi_2)$, and $A^4(\phi_1) \neq A^4(\phi_2)$. 
\end{lemma}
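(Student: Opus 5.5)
The plan is to argue by contrapositive in each of the three cases. In each case we assume an equality of automorphic objects and deduce that, for every prime $p\nmid N_1N_2$, we have $p\in\RP(\phi_1)$ if and only if $p\in\RP(\phi_2)$. This is exactly the statement $\{p:p \nmid N_2,p \in \RP(\phi_1)\} = \{p:p \nmid N_1, p \in \RP(\phi_2)\}$, since both sets consist of primes coprime to $N_1N_2$.

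Because the three objects only become coarser ($\phi \mapsto \Ad(\phi)\mapsto A^4(\phi)$), it suffices to treat the hypothesis $A^4(\phi_1)=A^4(\phi_2)$. Equality as automorphic representations gives equality of the Satake parameters at all primes $p\nmid N_1N_2$ where both are unramified. For $\Ad$ this is immediate from multiplicity one for $GL_3$. For $A^4$ it is strong multiplicity one for $GL_5$, applied to the cuspidal representations guaranteed in this section. One could even argue with equality at almost all primes only, since that changes the sets by finitely many primes; but we want exact equality of the two sets, so we use equality of unramified local components.

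The key step is then a local one. Writing $\gamma=\alpha_p/\beta_p$ (with $\alpha_p\beta_p=\chi_\phi(p)$), we have $|\lambda_\phi(p)|^2 = |\alpha_p+\beta_p|^2 = 2+\gamma+\gamma^{-1}$ after using $|\alpha_p\beta_p|=1$; in fact $|\lambda_\phi(p)|^2-1=A_\phi(p)=\gamma+1+\gamma^{-1}$, which is \eqref{>3atp^2}. Hence $p\in\RP(\phi)$ if and only if $\gamma+\gamma^{-1}\le 2$, a condition depending only on the unordered multiset $\{\gamma,\gamma^{-1}\}$. The parameters of $A^4(\phi)$ at $p$ are $\{\gamma^{2},\gamma,1,\gamma^{-1},\gamma^{-2}\}$, so we must recover $\gamma+\gamma^{-1}$ from this multiset.

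Set $t=\gamma+\gamma^{-1}$. The multiset determines $e_1=\gamma^2+\gamma+1+\gamma^{-1}+\gamma^{-2} = t^2+t-1$. This does not pin down $t$ uniquely, since the same value arises from $t$ and $-1-t$. So instead we recover the set $\{\gamma^{\pm1},\gamma^{\pm2}\}$ directly. Removing one $1$ from the multiset, the remaining four elements are $\gamma^{\pm1},\gamma^{\pm2}$. Knowing this four-element multiset $M$, any other $\gamma'$ producing the same $M$ satisfies $\gamma'\in M$, so $\gamma'\in\{\gamma^{\pm1},\gamma^{\pm2}\}$.

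The obstacle is the possibility $\gamma'=\gamma^{2}$ (up to inversion). In that case $\{\gamma^{\pm 2},\gamma^{\pm4}\}=\{\gamma^{\pm1},\gamma^{\pm2}\}$, which forces $\gamma^4=\gamma^{\pm1}$, so $\gamma$ is a root of unity. Then $|\gamma|=1$, and $t$ and $t'$ are both real and lie in $[-2,2]$, so both primes lie in $\RP$ anyway. In fact, whenever Ramanujan fails at $p$ for $\phi$, $\gamma$ is real positive and not equal to $1$, so $\gamma'$ must have modulus $|\gamma|^{\pm1}$ as well. This forces $\gamma'=\gamma^{\pm1}$, giving $t'=t>2$. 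Thus failure of Ramanujan is preserved, and the two sets agree.

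Finally, the implications $\phi_1=\phi_2 \Rightarrow \Ad(\phi_1)=\Ad(\phi_2) \Rightarrow A^4(\phi_1)=A^4(\phi_2)$ complete the argument. The second follows since $A^4$ is determined by the parameters of $\Ad$, via $\Sym^2(\Ad)=A^4\boxplus\Ad\boxplus 1$ locally, or directly from the parameter lists.
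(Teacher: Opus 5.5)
Your proof is correct and follows the paper's route. Equality of the automorphic objects gives equality of the unramified local parameter multisets at every $p\nmid N_1N_2$, and failure of Ramanujan at $p$ is exactly $\gamma=\alpha_p/\beta_p\in(0,1)\cup(1,\infty)$ (Luo--Zhou's Lemma 2.1); your exclusion of $\gamma'=\gamma^{\pm2}$ supplies the detail the paper skips with ``by the same argument'' for $A^4$. Even more simply, $t'=|\lambda_{\phi_2}(p)|^2-2\geq -2$ already rules out the spurious root $-1-t<-3$ of $t^2+t-1$ when $t>2$.

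There are two harmless slips. Locally $\Sym^2(\Ad)=A^4\boxplus 1$; it is $\Ad\times\Ad$ that equals $A^4\boxplus\Ad\boxplus 1$. Also, $\{\gamma^{\pm2},\gamma^{\pm4}\}=\{\gamma^{\pm1},\gamma^{\pm2}\}$ only gives $\gamma^4\in\{\gamma^{\pm1},\gamma^{\pm2}\}$, which still forces $\gamma$ to be a root of unity.
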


\begin{proof}
    The assertion that $\phi_1 \neq \phi_2$ is trivial. Let $\alpha_p^{(i)},\beta_p^{(i)}$ be the local parameters of $\phi_i$ at $p\nmid N_1N_2$, $i=1,2$. If $\Ad(\phi_1) = \Ad(\phi_2)$ were to hold, then for every prime $p\nmid N_1N_2$ we have 
    $$\left\{\frac{\alpha_p^{(1)}}{\beta_p^{(1)}},1,\frac{\beta_p^{(1)}}{\alpha_p^{(1)}}\right\} = \left\{\frac{\alpha_p^{(2)}}{\beta_p^{(2)}},1,\frac{\beta_p^{(2)}}{\alpha_p^{(2)}}\right\}.$$
    However, by \cite[Lemma 2.1]{LuoZhouMR3928043} we know $p \notin \RP(\phi_i) \Leftrightarrow \frac{\alpha_p^{(1)}}{\beta_p^{(1)}} \in (0,1) \cup (1,\infty)$. It follows that $\RP(\phi_1) = \RP(\phi_2)$, which is a contradiction. Therefore $\Ad(\phi_1) \neq \Ad(\phi_2)$. By the same argument we also conclude $A^4(\phi_1) \neq A^4(\phi_2)$.
\end{proof}

Note that if $\Sym^{2n}(\phi_1)$ and $\Sym^{2n}(\phi_2)$ were known to exist as cuspidal automorphic representations on $\GL_{2n+1}(\mathbb{A}_\mathbb{Q})$, then we also have $\Sym^{2n}(\phi_1) \otimes \overline{\chi_{\phi_1}}^n \neq \Sym^{2n}(\phi_2) \otimes \overline{\chi_{\phi_2}}^n$. 

\begin{lemma}\label{prime number theorem cusp form}
    Let $\phi$ be a primitive Hecke-Maass cusp form of level $N$. Then as $X \to \infty$ we have 
    \begin{align*}
        \sum_{\substack{p \leq X \\ p\nmid N}} A_{\phi}(p) = o(\pi(X)), \\
        \sum_{\substack{p \leq X \\ p\nmid N}} A^{[4]}_{\phi}(p) = o(\pi(X)), \\
        \limsup_{X \to \infty} \frac{\sum_{\substack{p \leq X \\ p\nmid N}} |A^{[3]}_{\phi}(p)|^2}{\pi(X)} \leq 1, \\
        \limsup_{X \to \infty} \frac{\sum_{\substack{p \leq X \\ p\nmid N}} |A^{[4]}_{\phi}(p)|^2}{\pi(X)} \leq 1.
    \end{align*}
\end{lemma}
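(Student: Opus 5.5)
The plan is to derive all four statements from the analytic properties of standard $L$-functions attached to cuspidal automorphic representations, namely non-vanishing on $\Re s = 1$ and the resulting prime number theorem, together with Rankin--Selberg theory. Throughout, $\phi$ is assumed not to be of dihedral, tetrahedral or octahedral type. Then $\Ad(\phi)$, $\Sym^3(\phi)$ and $A^4(\phi)$ are cuspidal on $\GL_3$, $\GL_4$ and $\GL_5$ respectively, by \cite{GelbartJacquetMR533066}, \cite{Kim2003}, \cite{KimShahidi2002} and \cite{KimShahidi2MR1923967}. At $p \nmid N$, the Dirichlet coefficients at primes are the traces of the local parameters:
\[
A_\phi(p)=\tfrac{\alpha_p}{\beta_p}+1+\tfrac{\beta_p}{\alpha_p},\qquad A^{[3]}_\phi(p)=\alpha_p^3+\alpha_p^2\beta_p+\alpha_p\beta_p^2+\beta_p^3,
\]
and similarly for $A^{[4]}_\phi(p)$. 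These agree with the coefficient $a_\pi(p)$ of $-L'/L(s,\pi)$ at $p$.

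\emph{First two statements.} Let $\pi$ be a unitary cuspidal representation of $\GL_n(\mathbb{A}_\mathbb{Q})$ that is not the trivial character. Jacquet--Shalika show that $L(s,\pi)$ is entire and non-vanishing on $\Re s = 1$, with a standard zero-free region to its left. Running the usual Tauberian or contour argument on $-L'/L$ gives $\sum_{n\le X}\Lambda(n)a_\pi(n)=o(X)$. Prime powers contribute negligibly: by the Kim--Sarnak bound \eqref{KimSarnakboundforphi}, the local parameters of $\Ad(\phi)$ and $A^4(\phi)$ have modulus at most $p^{7/32}$ and $p^{7/16}$ respectively, so the terms with $p^k$, $k\ge 2$, total $O(X^{1/2+7/16+\epsilon})=o(X)$. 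Partial summation then removes the weight $\log p$ and yields $\sum_{p\le X,\,p\nmid N}a_\pi(p)=o(\pi(X))$. Taking $\pi=\Ad(\phi)$ and $\pi=A^4(\phi)$ gives the first two assertions; ramified primes are excluded by definition of the coefficients, so nothing extra is needed there.

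\emph{Last two statements.} Here the tool is the Rankin--Selberg $L$-function $L(s,\pi\times\tilde\pi)$ for $\pi=\Sym^3(\phi)$ or $A^4(\phi)$. It has a simple pole at $s=1$ and no zeros on $\Re s=1$ (Shahidi), so the prime number theorem for Rankin--Selberg $L$-functions gives
\[
\sum_{p\le X,\,p\nmid N}|a_\pi(p)|^2\log p\sim X .
\]
Two points need care. First, the coefficient of $-L'/L(s,\pi\times\tilde\pi)$ at $p$ is exactly $|a_\pi(p)|^2$, because the unramified local factor is $\prod_{i,j}(1-\gamma_i\bar\gamma_j p^{-s})^{-1}$; here unitarity identifies the contragredient parameters with the $\bar\gamma_j$. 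Second, one must check that the prime-power terms are negligible. This is the step I expect to be the main obstacle, since Kim--Sarnak alone does not obviously suffice for $\Sym^3$ and $A^4$. For example, for $A^4$ with $k=2$ the bound is only $p^{2\cdot 2\cdot 7/16}$ per term, which is too large.

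To get around this, I would avoid pointwise bounds on prime powers. Since only a $\limsup\le 1$ is claimed, an upper bound suffices. I would use the standard fact that $\sum_{n\le X}\Lambda(n)a_{\pi\times\tilde\pi}(n)\sim X$, where the coefficients $a_{\pi\times\tilde\pi}(p^k)=|\sum_i\gamma_i^k|^2$ are nonnegative. Hence the sum over primes alone is at most $(1+o(1))X$, giving $\sum_{p\le X}|a_\pi(p)|^2\log p\le (1+o(1))X$. This is precisely why the statement only asserts $\limsup\le 1$. Partial summation, which is legitimate because the terms are nonnegative, converts this into $\sum_{p\le X}|a_\pi(p)|^2\le(1+o(1))\pi(X)$, completing the proof. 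The only external input needed is the pole and non-vanishing on $\Re s=1$ for $L(s,\pi\times\tilde\pi)$.
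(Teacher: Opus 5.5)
Your proposal is correct and follows the same route as the paper. The first two estimates come from the prime number theorem for the cuspidal $L$-functions of $\Ad(\phi)$ on $\GL_3$ and $A^4(\phi)$ on $\GL_5$, where the paper simply cites \cite[Theorem 5.13]{IwaniecKowalskiMR2061214} and your Kim--Sarnak treatment of prime powers is sound. For the last two the paper just cites \cite[Remark 3.3]{LuoZhouMR3928043}; your Rankin--Selberg argument for $L(s,\pi\times\tilde\pi)$ uses the nonnegativity of all $-L'/L$ coefficients to discard prime powers without Hypothesis H, which is the standard way to get that one-sided bound and explains why only $\limsup\le 1$ is asserted.
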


\begin{proof}
    The first two relations follow from the fact that $\Ad(\phi)$ and $A^4(\phi)$ cuspidal automorphic representations on $\GL_3(\bA_\bQ)$ and $\GL_5(\bA_\bQ)$ respectively, and the Prime Number Theorem for automorphic $L$-functions \cite[Theorem 5.13]{IwaniecKowalskiMR2061214}. The last two relations are \cite[Remark 3.3]{LuoZhouMR3928043}.
\end{proof}

The key new input of our work is the following lemma:

\begin{lemma}\label{prime number theorem rankin-selberg}
    Let $\phi_1$ and $\phi_2$ be primitive Hecke-Maass cusp forms of levels $N_1$ and $N_2$. Assuming $\{p:p \nmid N_2,p \in \RP(\phi_1)\} \neq \{p:p \nmid N_1, p \in \RP(\phi_2)\}$, then as $X \to \infty$ we have 
    \begin{align*}
        \sum_{\substack{p \leq X \\ p\nmid N_1N_2}} A_{\phi_1}(p)A_{\phi_2}(p) = o(\pi(X)), \\
        \sum_{\substack{p \leq X \\ p\nmid N_1N_2}} A_{\phi_1}(p)A^{[4]}_{\phi_2}(p) = o(\pi(X)).
    \end{align*}
\end{lemma}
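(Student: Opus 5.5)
The plan is to read both sums as prime sums of Dirichlet coefficients of Rankin--Selberg $L$-functions, and then apply the prime number theorem for Rankin--Selberg $L$-functions of two cuspidal representations that are not twist-dual. For $p\nmid N_1N_2$ the local parameters of $\Ad(\phi_i)$ are $\{\alpha_p^{(i)}/\beta_p^{(i)},1,\beta_p^{(i)}/\alpha_p^{(i)}\}$. This set is closed under inversion, so $\Ad(\phi_i)$ is self-dual, and $A_{\phi_i}(p)$ is the sum of these parameters. Hence
\[
A_{\phi_1}(p)A_{\phi_2}(p)=a_{\Ad(\phi_1)\times\Ad(\phi_2)}(p), \qquad A_{\phi_1}(p)A^{[4]}_{\phi_2}(p)=a_{\Ad(\phi_1)\times A^4(\phi_2)}(p),
\]
where $a_{\pi\times\pi'}(p)$ is the $p$-th Dirichlet coefficient of $L(s,\pi\times\pi')$. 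The same self-duality holds for $A^4(\phi_2)$. We work under the section's standing assumption that the forms are not dihedral, tetrahedral or octahedral. Then $\Ad(\phi_1)$ is cuspidal on $\GL_3(\bA_\bQ)$ and $A^4(\phi_2)$ is cuspidal on $\GL_5(\bA_\bQ)$.

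Next I would invoke the prime number theorem for Rankin--Selberg $L$-functions, in the form proved by Liu--Ye (or Liu--Wang--Ye) for cuspidal $\pi$ on $\GL_m$ and $\pi'$ on $\GL_{m'}$ over $\bQ$. It says that $\sum_{p\le X}a_{\pi\times\pi'}(p)\log p=o(X)$ unless $\pi'\cong\widetilde\pi\otimes|\det|^{it}$ for some real $t$. The input is that $L(s,\pi\times\pi')$ is entire and nonvanishing on $\Re s=1$ (Shahidi, Jacquet--Piatetski-Shapiro--Shalika) away from that exceptional case. One also needs enough control of the coefficients at prime powers, which the Kim--Sarnak bound \eqref{KimSarnakboundforphi} gives, so that the prime-power terms are negligible. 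Partial summation then turns the $o(X)$ statement into $o(\pi(X))$. The finitely many primes dividing $N_1N_2$ do not matter.

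For the second sum, $m=3\neq5=m'$. Hence $A^4(\phi_2)$ cannot be a twist of $\widetilde{\Ad(\phi_1)}$, and the conclusion is immediate.

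The first sum is where the hypothesis enters, and this is the main step. I must exclude $\Ad(\phi_2)\cong\Ad(\phi_1)\otimes|\det|^{it}$ (using $\widetilde{\Ad(\phi_1)}\cong\Ad(\phi_1)$). Both representations have trivial central character. Comparing central characters gives $|\cdot|^{3it}=1$, so $t=0$, and therefore $\Ad(\phi_2)\cong\Ad(\phi_1)$. Lemma \ref{small lemma} rules this out under our assumption. The only delicate points are this reduction of the unitary twist, which may need to be phrased via Ramakrishnan's or strong multiplicity one if one prefers not to speak of twists by $|\det|^{it}$, and checking that the cited prime number theorem is available unconditionally for the pairs $\GL_3\times\GL_3$ and $\GL_3\times\GL_5$. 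For these pairs I expect Hypothesis H on the prime-power terms to be known: it holds for $m,m'\le4$ in general, and for the $\GL_5$ factor one can use that $A^4(\phi_2)$ comes from $\GL_2$, so Kim--Sarnak bounds its local parameters. If Hypothesis H were not available, I would instead work directly with the logarithmic derivative of $L(s,\pi\times\pi')$ and a Wiener--Ikehara type Tauberian argument. Since only $o(\pi(X))$ is needed, this requires nothing more than nonvanishing on $\Re s=1$ and bounding the prime-power contribution to $\sum_{n\le X}\Lambda(n)a(n)$ by $O(X^{1-\delta})$ using the Kim--Sarnak exponent $7/64$.
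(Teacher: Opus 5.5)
Your route differs from the paper's. The paper invokes Thorner--Zhao to exclude Landau--Siegel zeros of $L(s,\Ad(\phi_1)\times\Ad(\phi_2)\otimes\chi)$ and $L(s,\Ad(\phi_1)\times A^4(\phi_2)\otimes\chi)$ for every Hecke character $\chi$. It then specializes to $\chi=|\cdot|^{it}$ to get a standard zero-free region and quotes the prime number theorem of Iwaniec--Kowalski. Lemma~\ref{small lemma} is used only to supply the hypotheses $\phi_1\neq\phi_2$ and $A^4(\phi_1)\neq A^4(\phi_2)$ of Thorner--Zhao's theorem. You use only two classical facts: $L(s,\pi\times\pi')$ is entire unless $\pi'\cong\widetilde{\pi}\otimes|\det|^{it}$, and it does not vanish on $\Re s=1$ (Shahidi). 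A Tauberian argument then finishes. For a qualitative $o(\pi(X))$ with the forms fixed this suffices: even a real exceptional zero $\beta<1$ would only contribute $O(X^{\beta})$. So your argument shows the zero-free region is more than the lemma needs. It also makes clear that the second estimate needs no hypothesis at all (degrees $3\neq 5$). Your central-character argument forcing $t=0$ is exactly what excludes the twists $|\det|^{it}$ in the first estimate. The paper's route would give an effective error term, which is never used.

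One step, however, fails as you state it: the prime powers for $\Ad(\phi_1)\times A^4(\phi_2)$. Write $\gamma_i=\alpha_p^{(i)}/\beta_p^{(i)}$, so that $|\gamma_i|^{\pm1}\le p^{7/32}$ by Kim--Sarnak. Put $c_1(p)=\gamma_1^2+1+\gamma_1^{-2}$ and $c_2(p)=\gamma_2^4+\gamma_2^2+1+\gamma_2^{-2}+\gamma_2^{-4}$. The coefficient of $-\frac{L'}{L}(s,\Ad(\phi_1)\times A^4(\phi_2))$ at $p^2$ is $c_1(p)c_2(p)\log p$.

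Pointwise you only get $|c_1(p)c_2(p)|\le 15p^{21/16}$, and summing over $p\le X^{1/2}$ gives $X^{37/32}$, not $O(X^{1-\delta})$. For the same reason the Kim--Sarnak bound alone does not yield Hypothesis H for $A^4(\phi_2)$. It only gives $|a_{A^4(\phi_2)}(p^k)|^2p^{-k}\ll p^{-k/8}$, which is not summable over $p$ for $2\le k\le 8$. Pointwise bounds do suffice for $\Ad(\phi_1)\times\Ad(\phi_2)$ at every $k\ge2$, and for $k\ge 3$ in the mixed case.

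The fix is Cauchy--Schwarz, with a mean-square bound on the $\GL_5$ side. The terms $|c_2(p)|^2\log p$ are among the nonnegative coefficients, up to $z^2$, of $-\frac{L'}{L}(s,A^4(\phi_2)\times\widetilde{A^4(\phi_2)})$, which has a simple pole at $s=1$. Hence $\sum_{p\le z}|c_2(p)|^2\log p\ll z^2\log z$. Combined with $|c_1(p)|\le 3p^{7/16}$, this bounds the $p^2$-terms by $O(X^{31/32}(\log X)^{1/2})$.

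The same positivity also supplies what a Wiener--Ikehara argument needs for signed coefficients. The nonnegative majorant is half the coefficients of $-\frac{L'}{L}(s,\pi\times\widetilde{\pi})-\frac{L'}{L}(s,\pi'\times\widetilde{\pi'})$. With these additions your proof is complete. The paper's one-line appeal to the prime number theorem does not address prime powers either.
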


\begin{proof}
    Let $\chi$ be an Hecke character on $\bA_\bQ^\times$. Since we have assumed $\{p:p \nmid N_2,p \in \RP(\phi_1)\} \neq \{p:p \nmid N_1, p \in \RP(\phi_2)\}$, by Lemma \ref{small lemma} we have $\phi_1 \neq \phi_2$ and $A^4(\phi_1) \neq A^4(\phi_2)$. Under these conditions, by the recent work of Thorner and Zhao \cite[Theorem 1.3 (1a)(3)]{ThornerZhao2026}, the Rankin-Selberg $L$-functions $L(s,\Ad(\phi_1) \times \Ad(\phi_2) \otimes \chi)$ and $L(s,\Ad(\phi_1) \times A^4(\phi_2) \otimes \chi)$ have no Landau-Siegel zeros relative to an absolute constant $c_1>0$. Explicitly, these $L$-functions are non-vanishing in the following real interval:
    $$\left(1-\frac{c_1}{\log (Q_{\phi_1}Q_{\phi_2}Q_\chi)},1\right).$$
    Here $Q_\chi$ is the analytic conductor of $\chi$. In particular, taking $\chi = |\cdot|^{it}, t \in \mathbb{R}$ to be the Archimedean characters, we conclude that the Rankin-Selberg $L$-functions $L(s,\Ad(\phi_1) \times \Ad(\phi_2))$ and $L(s,\Ad(\phi_1) \times A^4(\phi_2))$ are non-vanishing in the following standard zero-free region:
    $$\left\{s = \sigma+it:\sigma \geq 1-\frac{c_2}{\log (Q_{\phi_1}Q_{\phi_2}(|t|+3))} \right\},$$
    where $c_2 > 0$ is another absolute constant. The lemma now follows from the Prime Number Theorem for automorphic $L$-functions \cite[Theorem 5.13]{IwaniecKowalskiMR2061214}.
\end{proof}

\subsection{Proof of Theorem \ref{main theorem density two forms}}
Now we are ready to prove Theorem \ref{main theorem density two forms}:

\begin{proof}
    For a prime $p\nmid N_1N_2$ we define 
    $$U(p) := (1+3A_{\phi_1}(p)+3A_{\phi_2}(p)+5A^{[4]}_{\phi_1}(p))^2,$$
    and consider the partial sum $\sum_{\substack{p \leq X \\ p\nmid N_1N_2}} U(p)$. 
    
    On one hand, by \cite[Lemma 2.1]{LuoZhouMR3928043}, we have 
    $$p \notin \RP(\phi_1) \cup \RP(\phi_2) , p\nmid N_1N_2\Rightarrow A_{\phi_1}(p),A_{\phi_2}(p)>3, \hspace{3mm} A^{[4]}_{\phi_1}(p)>5.$$
    Thus we have 
    \begin{equation}\label{lower bound of U(p)}
        \sum_{\substack{p \leq X \\ p\nmid N_1N_2}} U(p) \geq 44^2 \sum_{\substack{p \leq X \\ p \notin \RP(\phi_1) \cup \RP(\phi_2)\\p\nmid N_1N_2}} 1. 
    \end{equation}

    On the other hand, by the Hecke relations we have for $p\nmid N_1N_2$: 
    \begin{align*}
        A_{\phi_i}(p)^2 &= A^{[4]}_{\phi_i}(p) + A_{\phi_i}(p) + 1, \hspace{3mm} i=1,2, \\
        A_{\phi_1}(p)A^{[4]}_{\phi_1}(p) &= |A_{\phi_1}^{[3]}(p)|^3 - 1.
    \end{align*}
    Therefore we can write
    \begin{align*}
        U(p) &= -11 + 15A_{\phi_1}(p) + 15A_{\phi_2}(p) + 19A^{[4]}_{\phi_1}(p) + 9A^{[4]}_{\phi_2}(p) \\
        &+ 18A_{\phi_1}(p)A_{\phi_2}(p) + 30A^{[4]}_{\phi_1}(p)A_{\phi_2}(p) \\
        &+30|A^{[3]}_{\phi_1}(p)|^2+25|A^{[4]}_{\phi_1}(p)|^2.
    \end{align*}
    Thus by Lemma \ref{prime number theorem cusp form} and Lemma \ref{prime number theorem rankin-selberg} we have 
    \begin{equation}\label{upper bound of U(p)}
        \limsup_{X \to \infty} \frac{\sum_{\substack{p \leq X \\ p\nmid N_1N_2}}U(p)}{\pi(X)} \leq 44. 
    \end{equation}

    Finally, combining \eqref{lower bound of U(p)} and \eqref{upper bound of U(p)} we have 
    $$\overline{d}((\RP(\phi_1) \cup \RP(\phi_2))^c) \leq \frac{1}{44},$$
    which is equivalent to 
    $$\underline{d}(\RP(\phi_1) \cup \RP(\phi_2)) \geq \frac{43}{44}.$$
    This completes the proof of Theorem \ref{main theorem density two forms}.
\end{proof}

\subsection{Proof of Theorem \ref{main theorem density finitely many forms}}
The proof of Theorem \ref{main theorem density finitely many forms} is almost identical to that of Theorem \ref{main theorem density two forms}. One works with 
$$U(p) := (1+3\sum_{j=1}^m A_{\phi_j}(p)+5A^{[4]}_{\phi_1}(p))^2$$
instead. The constant $26+9m$ arises from 
$$26+9m = 1+3\sum_{j=1}^m 3 + 5^2.$$
This completes the proof of Theorem \ref{main theorem density finitely many forms}. $\square$

\begin{remark}
     If we can establish a standard zero-free region for the Rankin-Selberg $L$-function $L(s,A^4(\phi_1) \times A^4(\phi_2))$, then we can elaborate Theorem \ref{main theorem density finitely many forms} to 
\begin{equation}
     \underline{d}(\cup_{j=1}^m \RP(\phi_j)) \geq 1-\frac{1}{1+9m+25m^2}
    \end{equation}
by considering 
$$U(p) = (1+3\sum_{j=1}^mA_{\phi_j}(p)+5\sum_{j=1}^mA_{\phi_j}^{[4]}(p))^2.$$
However, this is currently out of reach. 
\end{remark}

\bigskip 

\bibliographystyle{alpha}
\bibliography{Bib.bib}

\end{document}